\newcommand{\sm}{\smallskip}
\newcommand{\ms}{\medskip}
\newcommand{\ot}{\otimes}
\def\gr*{^{{\rm gr}*}}
\newcommand\pa{\partial}
\def\co{\colon}
\let\goth\mathfrak
\def\gg{\goth g} \newcommand{\g}{\gg}
\def\uce{\mathfrak{uce}}
\def\scF{\mathcal F}
\def\scJ{\mathcal J}
\def\scL{\mathcal L}
\def\scF{\mathcal F}
\newcommand\ga{\gamma}
\newcommand\Ga{\Gamma}
\newcommand\de{\delta} 
\newcommand\eps{\epsilon}
 \newcommand\La{\Lambda}
 \newcommand\vphi{\varphi}
\newcommand\rh{\varepsilon}
\newcommand\si{\sigma}
\newcommand\om{\omega} 
\newcommand{\kalg}{k\mathchar45\mathbf{alg}}
\newcommand{\Ralg}{R\mathchar45\mathbf{alg}}
 \DeclareMathOperator{\Aut}{Aut}
 \DeclareMathOperator{\AD}{AD}
 \DeclareMathOperator{\Cent}{Ctd}
 \DeclareMathOperator{\cent}{\Cent}
 \DeclareMathOperator{\Der}{Der}
 \DeclareMathOperator{\End}{End}
 \DeclareMathOperator{\Hom}{Hom}
 \DeclareMathOperator{\Id}{Id}
 \DeclareMathOperator{\IDer}{IDer}
  \DeclareMathOperator{\Ker}{Ker}
 \DeclareMathOperator{\Span}{Span}
 \DeclareMathOperator{\Spec}{Spec}
\newtheorem{theorem}{Theorem}[section]
\newtheorem{lemma}[theorem]{Lemma}
\newtheorem{lem}[theorem]{Lemma}
\newtheorem{prop}[theorem]{Proposition}
\newtheorem{cor}[theorem]{Corollary}
\theoremstyle{definition}
\newtheorem{example}[theorem]{Example}
\newtheorem{blank}[theorem]{}
\newtheorem{rem}[theorem]{Remark}
\newtheorem{definition}[theorem]{Definition}
\numberwithin{equation}{section} \allowdisplaybreaks
\begin{document}

\title{\'Etale descent of derivations}
\author{E.~Neher}
\address{Department of Mathematics and Statistics, University of Ottawa,
Ottawa, Ontario K1N 6N5, Canada} \email{neher@uottawa.ca}
\thanks{E. Neher wishes to thank NSERC for partial support through a
Discovery grant}

\author{A. Pianzola}
\address{Department of Mathematics, University of Alberta,
    Edmonton, Alberta T6G 2G1, Canada.
    \newline
 \indent Centro de Altos Estudios en Ciencia Exactas, Avenida de Mayo 866, (1084) Buenos Aires, Argentina.}
\thanks{A. Pianzola wishes to thank NSERC and CONICET for their
continuous support}\email{a.pianzola@gmail.com}

\keywords{Twisted form, derivation, centroid, \'etale descent.}
 \subjclass[2010]{Primary 17B67; Secondary 17B05, 12G05, 20G10.}

\begin{abstract} We study \'etale descent of derivations of algebras with
values in a module. The algebras under consideration are twisted forms of
algebras over rings, and apply to all classes of algebras, notably
associative and Lie algebras, such as the multiloop algebras that appear in the
construction of extended affine Lie algebras. The main result is Theorem~\ref{main}.
\end{abstract}

\maketitle

\section*{Introduction} Let $\gg$ be a simple finite-dimensional
Lie algebra over an algebraically closed field $k$ of characteristic $0$. The celebrated affine Kac-Moody Lie algebras are of the form
$$ \mathcal {E} = \scL \oplus kc \oplus kd$$
where $\scL$ is a (twisted) loop algebra of the form $ L(\gg, \pi)$ for some
diagram automorphism $\pi$ of $\gg.$ The element $c$ is central and $d $ is a
degree derivation for a natural grading of $\scL.$

It is thus natural to study the derivations of loop, or more generally
multiloop algebras. This is a problem with a long history going back to
\cite{bemo,Block}. In some sense there is a prescient aspect to \cite{bemo},
which seems to sense the existence of  Lie algebras that would have multiloop
algebras play the same role than the loop algebras play in the affine case.
These algebras would come into being a decade and a half later in the form of
extended affine Lie algebras (EALAs) and Lie tori (see
\cite{AABGP,n:eala-summer,n:eala}).

The first author has shown that any EALA is built from a Lie torus at the
``bottom'' in a way reminiscent of the affine construction.  Loosely speaking
an EALA is always of the form $$\mathcal{E} = \scL \oplus \mathcal{C} \oplus
\mathcal{D}$$ where $\scL$ is a Lie torus, $\mathcal{C}$ is central in $\scL
\oplus \mathcal{C}$  and $\mathcal {D}$ is a space of derivations of the
bottom Lie torus $\scL$. It is known, save for perfectly understood
exceptions in absolute type $A$, that Lie tori are always multiloop algebras
\cite{ABFP2} (but not conversely, except for nullity $1$ as shown in
\cite{P2}). We begin to see the central importance that the understanding of
the Lie algebra of $k$-linear derivations of multiloop algebras has for EALA
theory. They are also important for structure of universal central extensions
\cite{N1}

Exploiting  the fact that a centreless Lie torus $\scL$ which is finitely
generated over its centroid $R$ is \'etale (even Galois) locally isomorphic
 to the $R$--algebra $\gg \otimes_k R$, there is a very transparent way of understanding the
derivations of $\scL.$ The main idea (see \cite{P} for details) is
disarmingly simple and we outline it here since it will serve as the blue
print for our work: Let $S/R$ be an \'etale covering that trivializes $\scL.$
\sm

(a) As shown in \cite{bemo}, upstairs, namely for $\gg \otimes_k S,$ the picture is perfectly understood:
Besides the inner derivations we have $\Der_k(S)$ acting naturally as derivations of $\gg \otimes_k S.$
\sm

(b)  One can recover $S$ from $\gg \otimes_k S$ as its centroid. Thus (a)
says that the derivations of the centroid of $\gg \otimes_k S$ extend
naturally to $\gg \otimes_k S.$ More precisely, $\Der_k( \gg \ot_k S) \simeq
\IDer(\gg \ot_k S) \rtimes \Der_k(S)$ with $\IDer(\gg \ot_k S) \simeq
\Der_k(\gg) \ot_k S$. 
     \sm

(c) By descent considerations (\cite[Lemma~4.6]{GP2}) the  centroid of $\scL$ is $R$. Every derivation of $\scL$ naturally induces a derivation of $R$, but it is not
obvious that the derivations of $R$ can be lifted to $\scL$ (as it does in the
trivial case (a) above). \sm

(d) Because $S/R$ is \'etale, every $k$-linear derivation of $R$ extends
uniquely to a $k$-linear derivation of $S$, hence to a derivation of $\gg
\otimes_k S$ by (a). \sm

(e) The derivation of $\gg \otimes_k S$ defined in (d) descends to $\scL$.
\sm

The resulting picture is thus completely analogous to the one of the trivial case:
$$\Der_k(\scL) \simeq \IDer_k(\scL) \rtimes \Der_k(R).$$
A close inspection shows that more important than the structure of
$\Der_k(\scL)$ to the theory of EALAs of central extensions is the structure
of $\Der_k(\scL, N)$ where $N$ is the graded dual of $\scL.$ Rather than
studying this particular case we set out to see if the descent formalism will
shed information about the structure of $\Der_k(\scL, N)$ for an {\it
arbitrary} $\scL$--module $N$ and $S/R$--form $\scL$. The answer is a
resounding yes. There are, however, subtle technical and philosophical
difficulties to overcome before one can even state a structure result. This
is already quite evident in (b) and (c) above. While a derivation of $\scL$
induces a derivation of its centroid, what does a derivation in $\Der_k(\scL,
N)$ induce, and on what? As we shall see, proceeding in a natural way will
guide us -- as it always seems to do -- towards the correct answer:
Theorem~\ref{main}.

Although in this introduction we have emphasized Lie algebras, the main
results of this paper will be established for arbitrary algebras. This
substantially broadens the applications of our work. For example, in \ref
{asso-rsult} we use our main theorem to derive a new characterization of the
first Hochschild cohomology group of separable associative algebras. Our approach is new even when specialized to the previously known cases.\medskip

Throughout, $k$ will be a commutative associative unital ring. We denote by
$\kalg$ the category of commutative associative unital $k$-algebras with
unital algebra homomorphisms as morphisms. If $S\in \kalg$ and $N$ is a
$k$--module, we put $N_S = N \ot_k S$. The term {\em $k$-algebra\/} will mean
a $k$-module $A$ together with a $k$-bilinear map $A \times A \to A$, $(a_1,
a_2) \mapsto a_1 a_2$. In particular, we do not require any further
identities  (even though our interest is mostly with associative or Lie
algebras). Also note that the left and right $k$-module structure of $A$ are assumed to coincide: $ca=ac$ for all $c\in k$ and $a\in A$.
\sm

\noindent \textit{Acknowledgments.} The authors would like to thank the referees for their valuable comments.

\section{Dimodules} \label{sec:dimod}

We want to define the concept of a derivation of an {\it arbitrary} $k$-algebra $A$
with coefficients in a $k$--module $M.$  Our guiding principle is the
Leibnitz rule. To make sense of it in the most general way we introduce the
concept of dimodule. Proceeding in this way, we can apply our results to
various classes of algebras (associative, Lie, Jordan, ...).

Let $A$ be a $k$--algebra. An {\it $(A,k)$--dimodule} is a $k$--module $M$
together with a pair of  $k$--bilinear maps  $A \times M \to M, (a,m) \mapsto
a \cdot m$ and $M \times A \to M, (m,a) \mapsto m \cdot a$,  which we call
{\it left} and {\it right} action of $A$ on $M.$\footnote{We intentionally
put no compatibility assumptions between the two actions. The authors are
aware that this rather general concept does not agree with the usual notion
of a bimodule if $A$ is, for example, an associative algebra or a Lie
algebra. We hope that the use of different terminology will avoid any
possible confusion  and that the following results will convince the reader
of the usefulness of this new more general concept.}

\begin{rem}\label{base-change} The notion of an $(A,k)$-dimodule is stable
under base change. Indeed, if $K\in \kalg$ and $M$ is a $(A,k)$-dimodule,
then $M_K$ is naturally a $(A_K,K)$--dimodule by defining
\begin{equation} \label{di-mod-act}
 (a\ot s_1) \cdot (m \ot s_2) = (a \cdot m) \ot s_1s_2 \quad \hbox{and}\quad
 (m \ot s_1) \cdot (a \ot s_2) = (m \cdot a) \ot s_1s_2 \end{equation}
for all $a\in A$, $m\in M$ and $s_1, s_2 \in S$.
\end{rem}

\begin{example}\label{trivial} The algebra $A$ is in a natural way (via left and
right multiplication) an $(A,k)$--dimodule. Similarly, $A^* = \Hom_k(A,k)$ is
canonically an $(A,k)$--dimodule by defining $a \cdot \vphi$ and $\vphi \cdot
a$ for $a\in A$, $\vphi \in A^*$ as follows: $(a\cdot \vphi)(a') =
\vphi(a'a)$ and $(\vphi \cdot a)(a') = \vphi(aa')$ for $a'\in A$.

Suppose that $A$ is in fact an $R$--algebra for some $R\in \kalg$. Then $A^*$
is naturally an  $R$--module via $(r \cdot \vphi )(a) = \vphi(ra)$ for $\vphi
\in A^*$, $a\in A$ and $r\in R$. The $A$--action on $A^*$ is compatible with
the $R$--action in the following sense:
\begin{equation} \label{exa:lie-alg1}
   r \cdot (a \cdot \vphi) = (r a) \cdot \vphi = a \cdot (r \cdot\vphi) \quad
   \hbox{and} \quad
       r \cdot (\vphi \cdot a ) =  (r \cdot\vphi) \cdot a = \vphi  \cdot (r  a).
\end{equation}
Thus $A^*$ is an $(A,R)$--dimodule.
\end{example}

Given two  $(A,k)$--dimodules $M$ and $N$, an $(A,k)$--dimodule morphism $f:
M \to N$ is a $k$--linear map satisfying $f(a\cdot m) = a \cdot f(m)$ and
$f(m \cdot a) = f(m) \cdot a$ for $a\in A$ and $m\in M$. We thus have an
obvious category of $(A,k)$--dimodules for any given algebra $A$.  We leave
it to the reader to define in the  $(A,k)$--dimodule setting the concepts of
submodule, kernel, quotient... \sm

For the remainder of this section $A$ will denote a $k$--algebra and $M$ an
$(A,k)$--dimodule.

\subsection {Derivations}
A {\em derivation\/} of $A$ with values in $M$ is a $k$--linear map $d: A \to
M$ satisfying $d(a_1 a_2) = d(a_1) \cdot a_2 + a_1 \cdot d(a_2)$ for $a_i \in
A$. We denote by $\Der_k(A,M)$ the $k$--module of derivations of $A$ with
values in $M$. Note that $\Der_k(A,A) =: \Der_k(A)$ has a natural $k$--Lie
algebra structure: The commutator of two derivations of $A$ is a derivation
of $A$.

\begin{prop}\label{proppp} Let $A$ and $M$ be as above and let $K\in \kalg$. Then.

{\rm (a)} The canonical map $\om : \Hom_k (A,M) \ot_k K \to \Hom_K
(A_K,M_K)$, given by $f\ot x \mapsto f\ot' x$ where $(f\ot' x)(a\ot y) =
f(a)\ot xy$ for $a\in A$ and $x,y\in K$, maps  $\Der_k (A,M) \ot_k K$ to
$\Der_K(A_K,M_K)$.\sm

{\rm (b)} If $K$ is a flat $k$-module and $A$ is finitely presented as a
$k$-module, then $\om$ is an isomorphism for the $K$-modules considered in
{\rm (a)}, in particular \begin{equation}
 \label{der-extension1}
\Der_k(A,M)_K  \simeq \Der_K(A_K,M_K).
\end{equation}  \end{prop}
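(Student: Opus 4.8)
The plan is to handle (a) by a direct computation and (b) by realizing $\Der$ as the kernel of a ``Leibniz differential'' and comparing the two exact sequences obtained before and after base change to $K$.

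For (a), I would check directly that for $f\in\Der_k(A,M)$ the map $f\ot' x$ is a $K$-derivation of $A_K$. Its $K$-linearity is immediate from $(f\ot' x)(a\ot y)=f(a)\ot xy$ and the commutativity of $K$. For the Leibniz rule I would expand $(f\ot' x)\big((a_1\ot s_1)(a_2\ot s_2)\big)=f(a_1a_2)\ot xs_1s_2$, substitute $f(a_1a_2)=f(a_1)\cdot a_2+a_1\cdot f(a_2)$, and compare with the right-hand side of the Leibniz rule for $f\ot' x$, which by the base-change dimodule formulas \eqref{di-mod-act} equals $(f(a_1)\cdot a_2)\ot xs_1s_2+(a_1\cdot f(a_2))\ot s_1xs_2$; the two agree because $K$ is commutative. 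Since $\Der_K(A_K,M_K)$ is a $K$-submodule of $\Hom_K(A_K,M_K)$ and $\om$ is $K$-linear, it follows that $\om$ carries all of $\Der_k(A,M)\ot_k K$ into $\Der_K(A_K,M_K)$.

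For (b), I would first establish that $\om$ is an isomorphism on all of $\Hom$. Choosing a finite presentation $k^m\to k^n\to A\to 0$ and applying $\Hom_k(-,M)$ gives a left-exact sequence $0\to\Hom_k(A,M)\to M^n\to M^m$; tensoring with the flat module $K$ preserves exactness and exhibits $\Hom_k(A,M)\ot_k K$ as the kernel of a map $(M_K)^n\to(M_K)^m$. Base-changing the presentation to $K^m\to K^n\to A_K\to 0$ and applying $\Hom_K(-,M_K)$ exhibits $\Hom_K(A_K,M_K)$ as the kernel of the ``same'' map, so a check of the naturality of $\om$ on the free modules shows $\om\colon\Hom_k(A,M)\ot_k K\to\Hom_K(A_K,M_K)$ is an isomorphism. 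Since $A$ is finitely presented, so is $A\ot_k A$, and the identical argument yields an isomorphism $\om'\colon\Hom_k(A\ot_k A,M)\ot_k K\to\Hom_K(A_K\ot_K A_K,M_K)$ after the canonical identification $A_K\ot_K A_K\simeq(A\ot_k A)_K$.

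I would then write $\Der$ as a kernel. The Leibniz differential $\de\colon\Hom_k(A,M)\to\Hom_k(A\ot_k A,M)$, $\de(f)(a_1\ot a_2)=f(a_1a_2)-f(a_1)\cdot a_2-a_1\cdot f(a_2)$, is $k$-linear with $\Der_k(A,M)=\Ker\de$, and the analogous $\de_K$ over $K$ has $\Der_K(A_K,M_K)=\Ker\de_K$. Tensoring the left-exact sequence $0\to\Der_k(A,M)\to\Hom_k(A,M)\xrightarrow{\de}\Hom_k(A\ot_k A,M)$ with the flat module $K$ keeps it exact, and placing it over $0\to\Der_K(A_K,M_K)\to\Hom_K(A_K,M_K)\xrightarrow{\de_K}\Hom_K(A_K\ot_K A_K,M_K)$ with vertical maps $\om$ and $\om'$ gives a diagram with exact rows. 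The crucial point is that the right square commutes, i.e. $\om'\circ(\de\ot\Id_K)=\de_K\circ\om$; this is precisely the compatibility of $\om$ with multiplication and with the dimodule actions under base change verified in (a), checked on simple tensors $f\ot x$. Granting this, the induced map on kernels is an isomorphism by the five lemma, and it is manifestly the restriction of $\om$, which by (a) lands in $\Der_K(A_K,M_K)$; this yields \eqref{der-extension1}. I expect the main obstacle to lie in the two places where flatness and finite presentation must be combined: deriving the $\Hom$-isomorphism $\om$ from the presentation, and verifying that the Leibniz squares commute so that the five lemma applies, where the formulas \eqref{di-mod-act} and the commutativity of $K$ are exactly what make the compatibilities hold.
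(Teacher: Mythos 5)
Your proposal is correct and follows essentially the same route as the paper: realize $\Der$ as the kernel of the Leibniz differential $\de$, base-change the left-exact sequence along the flat module $K$, and compare it with the corresponding sequence over $K$ via the vertical maps $\om$ and $\om'$. The only cosmetic differences are that you reprove the flat base-change isomorphism for $\Hom$ from a finite presentation (the paper simply cites Bourbaki, \emph{Alg.\ Comm.}~I, \S2.10, Prop.~11) and you invoke bijectivity of $\om'$ together with the five lemma, whereas the paper gets by with mere injectivity of the right-hand vertical map, which requires only that $A\ot_k A$ be finitely generated.
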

\sm

{\bf Note}: To simplify notation we will often write $f \ot x$ for $f\ot' x$.
This is certainly justified in the setting of (b).

\begin{proof} (a) is immediate.
(b) Define $
   \de_A : \Hom_k(A,M) \to  \Hom_k(A\ot_k A, M)
$ as the unique $k$--linear map satisfying
\[
   \de_A(f) \,(a_1 \ot a_2) =  f(a_1a_2) - f(a_1) \cdot a_2 -a_1 \cdot
   f(a_2)
\] and observe $\Der_k(A,M) = \Ker \de_A$. We have the commutative
diagram
$$ \xymatrix{
   0 \ar[r] & \Der_k(A,M)_K \ar[r] \ar[d]^{\om |_{\Der}}
       &\Hom_k(A,M)_K \ar[r]^-{\de_A \ot \Id} \ar[d]^\om
      & \Hom_k(A \ot_k A, M) \ot_k K \ar[d]^\varepsilon\cr
   0 \ar[r] & \Der_K(A_K,M_K) \ar[r]
        & \Hom_K(A_K,M_K) \ar[r]^-{\de_{A\ot K}} \ar[r]
    & \Hom_K(A_K \ot_K A_K, M_K) }
$$
where the top row is exact because $K$ is flat and the bottom row is exact by
definition of $\Der_K(A_K,M_K)$. Bijectivity of $\om|_{\Der}$ now easily
follows since $\om$ is an isomorphism and $\rh$ is injective \cite[I, \S2.10,
Prop.~11]{bou:ACa}.   \end{proof}

\begin{example}[Lie algebras]\label{exa:lie-alg} If $L$ is a Lie algebra, an $L$--module
$M$ (in the usual sense) has a $(L,k)$--dimodule structure (that we call {\it
canonical}) as follows. The left action is the given module action. The right
action is defined by  $m \cdot l = - l \cdot m$ for $l\in L$, $m\in M$.  We
leave it to the reader to check that the definition of $\Der_k(L, M)$
coincides with the usual definition of derivations of $L$ with values in $M.$
We will also consider the subdimodule of $\Der_k(L,M)$ consisting of {\em
inner derivations} of $M$  defined as usual by
 \[ \IDer_k(L,M) = \{ \pa_m : m \in M \}, \quad
  \pa_m (l) = l \cdot m .  \]
\end{example}

\begin{cor}\label{cor-innder-Lie} Let $L$
be a Lie $k$--algebra and $M$  an $L$--module.  \sm

{\rm (a)} The canonical map
\[
    \IDer_k(L,M) \ot_k K \to \IDer_K(L_K, M_K), \quad
          \pa_m \ot s \mapsto \pa_{m \ot s} \]
is a well-defined epimorphisms of $k$-modules. \sm

{\rm (b)} The map in {\rm (a)} is injective, whence an isomorphism, if $K$ is
a flat $k$-module and $L$ is finitely generated as a $k$-module.
 \sm

{\rm (c)} Assume that $K$ is a faithfully flat $k$-module and that $L$ is
finitely presented as $k$-module. Then
$$
    \IDer_k(L,M) = \Der_k(L,M) \quad \iff \quad
           \IDer_k(L_K,M_K) = \Der_K(L_K,M_K).
$$
\end{cor}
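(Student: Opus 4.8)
The plan is to realize inner derivations as a quotient of $M$ and then reduce every claim to the behaviour of kernels under base change. Let $\pi \co M \to \IDer_k(L,M)$, $m \mapsto \pa_m$, be the defining surjection; since $\pa_m = 0$ exactly when $l \cdot m = 0$ for all $l \in L$, its kernel is the ``invariant'' submodule $M^L = \{m \in M : l \cdot m = 0 \text{ for all } l \in L\}$, and likewise $\pi_K \co M_K \to \IDer_K(L_K,M_K)$ has kernel $(M_K)^{L_K}$.

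For (a) I would not check well-definedness directly but factor the proposed map through the map $\om$ of Proposition~\ref{proppp}. Using the base-changed action \eqref{di-mod-act} one computes $\om(\pa_m \ot s) = \pa_{m \ot s}$, which lies in $\IDer_K(L_K,M_K)$; hence the composite $\IDer_k(L,M) \ot_k K \to \Der_k(L,M)\ot_k K \xrightarrow{\om} \Der_K(L_K,M_K)$ has image in $\IDer_K(L_K,M_K)$ and is the asserted map, so the latter is well defined. Surjectivity is then clear, since any $\pa_n$ with $n = \sum_i m_i \ot s_i$ is the image of $\sum_i \pa_{m_i}\ot s_i$.

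For (b) I would set up the commutative square whose vertical arrows are $\pi \ot \Id$ and $\pi_K$ and whose top edge is the identification $M \ot_k K = M_K$. Flatness of $K$ makes $\pi \ot \Id$ surjective with kernel $M^L \ot_k K$, so a short diagram chase shows that injectivity of our map is equivalent to the equality $M^L \ot_k K = (M_K)^{L_K}$ inside $M_K$ (the inclusion ``$\subseteq$'' being automatic). This is the crux, and it is exactly where finite generation is used: picking generators $l_1, \dots, l_n$ of $L$ one writes $M^L = \Ker(\mu)$ for $\mu \co M \to M^n$, $m \mapsto (l_1 \cdot m, \dots, l_n \cdot m)$. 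Flatness gives $M^L \ot_k K = \Ker(\mu \ot \Id_K)$, and since the $l_i \ot 1$ generate $L_K$ as a $K$-module and the dimodule action is $K$-linear in its first argument, $\Ker(\mu \ot \Id_K) = (M_K)^{L_K}$; the two descriptions coincide.

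For (c) I would combine (b) with Proposition~\ref{proppp}(b): faithful flatness supplies flatness and finite presentation supplies finite generation, so $\om$ and the map of (a) are both isomorphisms, compatibly with the inclusions $\IDer \hookrightarrow \Der$. Tensoring the exact sequence $0 \to \IDer_k(L,M) \to \Der_k(L,M) \to Q \to 0$ by the flat module $K$ and transporting along these isomorphisms identifies $Q \ot_k K$ with $\Der_K(L_K,M_K)/\IDer_K(L_K,M_K)$. Since $\IDer_k(L,M) = \Der_k(L,M)$ means $Q = 0$ while $\IDer_K(L_K,M_K) = \Der_K(L_K,M_K)$ means $Q\ot_k K = 0$, and $K$ is faithfully flat, the two vanishings are equivalent. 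The principal difficulty in the whole argument is the base-change-of-invariants identity in (b); this is the one place where the finiteness hypothesis on $L$ cannot be dispensed with.
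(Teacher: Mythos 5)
Your argument is correct, and while your treatment of (a) and (c) matches the paper's in substance, your proof of (b) takes a genuinely different route. The paper's entire proof is three sentences: (a) is declared immediate; for (b) it observes that your map is the restriction of the comparison map $\om$ of Proposition~\ref{proppp} to $\IDer_k(L,M)\ot_k K \subseteq \Der_k(L,M)\ot_k K$ (this inclusion is preserved because $K$ is flat), together with the fact that $\om$ is \emph{injective} as soon as $K$ is flat and $L$ is merely finitely generated --- the standard sharpening of Proposition~\ref{proppp}(b), where finite presentation yields bijectivity of $\Hom_k(L,M)\ot_k K \to \Hom_K(L_K,M_K)$ but finite generation already yields injectivity; and (c) is deduced from (b) and the proposition via faithful flatness, exactly as in your cokernel argument with $Q$. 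You instead prove (b) by presenting $\IDer_k(L,M)$ as the quotient $M/M^L$ (noting $\Ker(\pi) = M^L$) and reducing injectivity to the invariants base-change identity $M^L \ot_k K = (M_K)^{L_K}$, which you verify by writing $M^L = \Ker(\mu)$ for $\mu(m) = (l_1\cdot m, \dots, l_n \cdot m)$ and using that flat base change commutes with kernels, plus the $K$-bilinearity of the action \eqref{di-mod-act} to pass from the generators $l_i \ot 1$ to all of $L_K$. Both mechanisms ultimately rest on the same principle (finitely many elements of $L$ determine a map whose kernel commutes with flat base change), but your route buys self-containedness --- it avoids invoking the unstated Hom-injectivity refinement of Proposition~\ref{proppp} and makes visible exactly where finite generation enters --- whereas the paper's route is shorter and uniform, reusing $\om$ for all three parts.
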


\begin{proof} (a) is immediate. Under the hypothesis of (b), the map
$\om$ of Proposition~\ref{proppp} is injective. (c) follows from (b) and the
proposition.   \end{proof}

\begin{example} (Rings with twisted derivations).\footnote{We thank Kirill Zainoulline for pointing out this example.} \label{ex:KP} Even in the setting of associative algebras, derivations into dimodules rather than bimodules arise naturally in invariant theory. Recall \cite[\S1]{kp:gen-inv} that a {\em ring with twisted derivations} is a quadruple $(A, I, (\vphi_i)_{i\in I}, (d_i)_{i\in I})$ consisting of a $k$-algebra $A$, an index set $I$, a family $(\vphi_i)_{i\in I}$ of automorphisms $\vphi_i \in \Aut_k(A)$ and a family $(d_i)_{i\in I}$ of endomorphisms $d_i \in \End_k(A)$ satisfying
\[ d_i(a_1 \, a_2) = d_i(a_1) \, a_2 + \vphi_i(a_1) \, d_i(a_2) \]
for all $a_1, a_2 \in A$. Given $(A, I, (\vphi_i)_{i \in I})$, let $M=\prod_{i\in I} M_i$ where $M_i$ is the $A$-dimodule with $M_i = A$ as $k$-module and $A$-actions given by $a \cdot m_i = \vphi_i(a) m_i$ and $m_i \cdot a = m_i a$ (in both equations we use the multiplication of $A$ on the right hand side). The canonical isomorphism $\Hom_k(A, M) \simeq \prod_{i\in I} \Hom_k(A, M_i)$ induces a bijection between $\Der_k(A,M)$ and the set of all rings of twisted derivations of the form $(A, I, (\vphi_i)_{i\in I}, (d_i)_{i\in I})$.
\end{example}
\sm

\subsection{Centroids} We remind the reader that throughout this section $A$ denotes a $k$--algebra
and $M$ an $(A,k)$-dimodule.

\begin{definition}The {\em centroid
of $A$ with values in $M$\/} is defined as
\[
  \Cent_k(A,M) = \{ \chi \in \Hom_k(A,M): \chi(a_1 a_2) =
       \chi(a_1) \cdot a_2 = a_1 \cdot \chi(a_2) \hbox{ for all }
         a_1, a_2 \in A\}
\]
Observe that $\Cent_k(A,A)$  is the standard centroid of the $k$--algebra
$A$, which justifies our terminology and notation. For any $k$--algebra $A$
there is always a canonical ring homomorphism
\begin{equation} \label{def:cent-map}
 \chi \co k \to \cent_k(A)
\end{equation}
which sends $s\in k$ to $\chi_s$ defined by $\chi_s(a) = sa$ for $a \in A$.
We call $A$ {\em central\/} if $\chi$ is an isomorphism.
\end{definition}

The next results collect some basic but important results about
centroids. The mostly elementary proofs will be left to the reader.

\begin{lem}\label{der-R-mod}
Let $R \in \kalg$, and suppose that $M$ has an $R$--module structure which is compatible with the $(A,k)$--dimodule structure in the following sense,
\begin{equation} \label{der-R-mod1}
    r(m \cdot a) = (rm) \cdot a \quad \hbox{and} \quad
    r(a \cdot m) = a \cdot (rm)
\end{equation}
for $r\in R$, $a\in A$ and $m\in M$. \sm

Assume now that $A$ is also an $R$--algebra. Then the following hold. \sm

{\rm (a)} $\Hom_k(A,M)$ is an $R$--bimodule via \begin{equation}\label{referee1}
   (r \cdot f)(a) = r \big(f(a) \big) \quad \hbox{and} \quad (f \cdot r)(a)
       =   f(r a)
\end{equation}
for $r\in R$, $f\in \Hom_k(A,M)$ and $a\in A$. \sm

{\rm (b)} $\cent_k(A,M)$ is a subbimodule of\/ $\Hom_k(A,M)$.
 \sm

{\rm (c)} $\Der_k(A,M)$ is a submodule with respect to the left $R$--module
structure of $\Hom_k(A,M)$.
\end{lem}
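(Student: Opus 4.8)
The plan is to prove all three parts by direct pointwise verification on elements of $A$, since each assertion reduces to checking a defining identity and the only tools needed are the compatibility relations \eqref{der-R-mod1} together with the $R$-algebra structure of $A$. I therefore expect the lemma to be routine; the care required lies entirely in tracking which compatibility is invoked at each step, and in part~(c) in recognizing why the right action must be excluded. Throughout I use the standing convention that the $R$-module structures on $A$ and $M$ restrict to their given $k$-module structures (so $k$ acts through the structure map $k \to R$), which is what makes the formulas in (a) land in $\Hom_k(A,M)$.

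For part~(a), I would first confirm that $r \cdot f$ and $f \cdot r$ are $k$-linear: for $r \cdot f$ this uses that the $k$- and $R$-actions on $M$ commute, and for $f \cdot r$ it uses the analogous fact on $A$, in both cases a consequence of commutativity of $R$. The left-module axioms for $(r,f) \mapsto r \cdot f$ are then inherited verbatim from the $R$-module $M$ evaluated at $f(a)$, and the right-module axioms for $(f,r) \mapsto f \cdot r$ from the $R$-module $A$ evaluated at the argument $ra$. The only step mixing the two actions is the bimodule identity $(r \cdot f) \cdot s = r \cdot (f \cdot s)$; evaluating either side at $a$ yields $r\big(f(sa)\big)$, so they coincide. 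This is the one place where both definitions interact, and hence the main (though still elementary) obstacle.

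For parts~(b) and~(c), I would fix $r \in R$ together with $\chi \in \Cent_k(A,M)$, respectively $d \in \Der_k(A,M)$, and verify the defining identity for $r \cdot \chi$, respectively $r \cdot d$, by pushing $r$ through the centroid identity, respectively the Leibniz rule: the two clauses $r(m \cdot a) = (rm) \cdot a$ and $r(a \cdot m) = a \cdot (rm)$ of \eqref{der-R-mod1} are exactly what is needed to move $r$ past the right and left dimodule actions. For the right action in~(b), I would instead use $R$-bilinearity of the product of $A$ to write $r(a_1 a_2) = (r a_1) a_2 = a_1 (r a_2)$ and then apply the two centroid identities. Finally I would explain the asymmetry in~(c): carrying out the same computation for $d \cdot r$ leads to a term $(r a_1) \cdot d(a_2)$ for which no relation is available to rewrite it as $a_1 \cdot d(r a_2)$, so the Leibniz rule fails for the right action and only the left $R$-module structure is preserved.
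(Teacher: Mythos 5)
Your proof is correct and coincides with what the paper intends: the paper explicitly leaves this lemma's ``mostly elementary'' proof to the reader, and your argument is precisely that routine pointwise verification. In particular, your bookkeeping of which hypothesis enters where --- the two clauses of \eqref{der-R-mod1} for the left action, $R$-bilinearity of the product of $A$ for the right action in (b), and the unrewritable term $(ra_1)\cdot d(a_2)$ explaining why only the left structure survives in (c) --- is exactly right.
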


Recall that an algebra $B$ over some $R\in \kalg$ is  {\em perfect\/} if it
equals its derived algebra $B'$, defined to consist of the sums of products
$b_1 b_2$ with $b_i \in B$. It is immediate that the computation of the
derived algebra commutes with base ring extensions: $(B')_S = (B_S)'$. In
particular, if $B$ is perfect then so is $B_S$ for any $S\in \Ralg$. The
converse holds in the following situation.

\begin{lemma} \label{perf-descent} Let $B$ be an algebra over $R\in \kalg$ and let $S\in \Ralg$ be
faithfully flat. Then $B$ is perfect if and only if $B_S$ is.
\end{lemma}

\begin{proof} If $B_S$ is perfect, then $0 = B_S/(B_S)' \simeq (B/B')\ot_R S$
by flatness, whence $B/B' = 0$ by faithful flatness.
  \end{proof}

\begin{lemma}\label{cent-perf} Let $A$ be a perfect $R$--algebra
for some $R\in \kalg$,  and suppose that $M$ is also an $R$--module such that
the $R$--module  and the $(A,k)$--dimodule structures are related by
\begin{equation}\label{cent-perf1}
     r (a \cdot m) = (ra) \cdot m
 \end{equation}
for $r\in R$, $a\in A$ and $m\in M$. Then every $k$--linear centroidal
transformation is already $R$--linear:
\[ \cent_k ( A, M) = \cent_R(A, M).\]
Furthermore, if also \eqref{der-R-mod1} holds, the two $R$--module structures
of\/ $\cent_k(A,M)$\/ defined in {\rm \eqref{referee1}} coincide.
\end{lemma}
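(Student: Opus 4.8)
The plan is to prove the two inclusions separately and then read off the statement about module structures. The inclusion $\cent_R(A,M) \subseteq \cent_k(A,M)$ is immediate: the structure map $k \to R$ turns every $R$-linear map into a $k$-linear one, while the two centroidal conditions defining $\cent_R$ and $\cent_k$ are literally the same. The substance of the lemma is therefore the reverse inclusion, namely that every $\chi \in \cent_k(A,M)$ is automatically $R$-linear, and here perfectness of $A$ is the decisive hypothesis.

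First I would fix $\chi \in \cent_k(A,M)$, $r \in R$ and $a \in A$, and use perfectness to write $a = \sum_i a_i b_i$ with $a_i, b_i \in A$ (a finite sum). The role of this reduction is that the centroidal identities only constrain $\chi$ on \emph{products}; for a general element of $A$ one has no handle on $\chi(ra)$, which is precisely why perfectness cannot be omitted. I do not expect any genuine obstacle beyond threading the two centroidal identities and the compatibility hypothesis together in the correct order; the only conceptual point is recognizing that perfectness is exactly what licenses the passage to products.

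Next I would compute $\chi(ra)$ termwise. Since multiplication in the $R$-algebra $A$ is $R$-bilinear, $r(a_i b_i) = (r a_i)\, b_i$; applying the centroidal identity $\chi(xy) = x \cdot \chi(y)$ gives $\chi\big((r a_i)\, b_i\big) = (r a_i) \cdot \chi(b_i)$; the compatibility hypothesis \eqref{cent-perf1} rewrites this as $r\big(a_i \cdot \chi(b_i)\big)$; and a second use of the same centroidal identity turns $a_i \cdot \chi(b_i)$ back into $\chi(a_i b_i)$. Summing over $i$ and invoking additivity of $\chi$ and the $R$-module structure of $M$ yields $\chi(ra) = r\,\chi(a)$. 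Hence $\chi \in \Hom_R(A,M)$, so $\chi \in \cent_R(A,M)$, and the desired equality $\cent_k(A,M) = \cent_R(A,M)$ follows.

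For the final assertion, assume in addition that \eqref{der-R-mod1} holds. Then Lemma~\ref{der-R-mod}(a) makes $\Hom_k(A,M)$ an $R$-bimodule, with left and right actions given in \eqref{referee1} by $(r \cdot \chi)(a) = r\big(\chi(a)\big)$ and $(\chi \cdot r)(a) = \chi(ra)$, and by Lemma~\ref{der-R-mod}(b) both restrict to $\cent_k(A,M)$. The identity $\chi(ra) = r\,\chi(a)$ established above says exactly that these two actions agree on every $\chi \in \cent_k(A,M)$, so the two $R$-module structures coincide there, completing the proof.
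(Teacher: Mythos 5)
Your proof is correct and is exactly the elementary argument the paper has in mind (the paper explicitly leaves this proof to the reader): write $a=\sum_i a_ib_i$ using perfectness, thread $\chi(ra)=\sum_i \chi\big((ra_i)b_i\big)=\sum_i (ra_i)\cdot\chi(b_i)=\sum_i r\big(a_i\cdot\chi(b_i)\big)=r\chi(a)$ via the centroidal identity and \eqref{cent-perf1}, and observe that this identity is precisely the statement $(\chi\cdot r)(a)=(r\cdot\chi)(a)$ for the two structures of \eqref{referee1}. Your remark that perfectness is indispensable because the centroidal conditions only constrain $\chi$ on products is also the right conceptual point; nothing is missing.
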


We note that the conditions \eqref{der-R-mod1} and \eqref{cent-perf1} are
always fulfilled if $M$ is an $(A,R)$--dimodule, where $M$ is viewed as an
$(A,k)$--dimodule by restriction of scalars.

\begin{lemma} \label{cent-ext} Suppose that $R\in \kalg$, $B$ is an $R$--algebra
whose underlying $R$-module is finitely presented, $N$ is a $(B,R)$--dimodule
and $S\in \Ralg$ is a flat extension. Then the canonical map
\[  \cent_R(B,N) \ot_R S \to \cent_S(B_S, N_S)
    \]
is an isomorphism of $S$-modules, where $N_S$ is the $(B_S, S)$--dimodule
obtained from the $(B,R)$--dimodule $N$ by the base change $S/R$, see
\eqref{di-mod-act}. \end{lemma}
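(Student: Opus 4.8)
The plan is to imitate the proof of Proposition~\ref{proppp}(b), replacing the base ring $k$ by $R$ and the derivation-defining map $\de_A$ by a map whose kernel is the centroid. Since the centroid condition consists of the two equations $\chi(b_1 b_2) = \chi(b_1)\cdot b_2$ and $\chi(b_1 b_2) = b_1 \cdot \chi(b_2)$, I would introduce the $R$-linear map
\[
  \ga_B \co \Hom_R(B,N) \to \Hom_R(B\ot_R B, N) \times \Hom_R(B\ot_R B, N)
\]
sending $\chi$ to the pair $\big((b_1\ot b_2)\mapsto \chi(b_1b_2) - \chi(b_1)\cdot b_2,\ (b_1\ot b_2)\mapsto \chi(b_1b_2) - b_1\cdot\chi(b_2)\big)$, so that by definition $\cent_R(B,N) = \Ker \ga_B$. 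Tensoring the resulting left exact sequence with the flat $R$-module $S$ keeps it exact and yields the top row of a commutative diagram; the bottom row is the analogous defining sequence $0 \to \cent_S(B_S,N_S) \to \Hom_S(B_S,N_S) \xrightarrow{\ga_{B_S}} \Hom_S(B_S\ot_S B_S, N_S)^2$, which is exact by the very definition of $\cent_S(B_S,N_S)$.

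The vertical maps are the canonical ones. In the middle sits $\om\co \Hom_R(B,N)\ot_R S \to \Hom_S(B_S,N_S)$ (as in Proposition~\ref{proppp}(a), now over the base ring $R$), which is an isomorphism because $B$ is finitely presented over $R$ and $S$ is $R$-flat, by \cite[I, \S2.10, Prop.~11]{bou:ACa}. On the right, using the canonical identification $(B\ot_R B)_S \simeq B_S \ot_S B_S$, each factor gives the canonical map $\Hom_R(B\ot_R B, N)\ot_R S \to \Hom_S(B_S \ot_S B_S, N_S)$, and I would call the resulting map on the product $\varepsilon$; since $B$, hence $B\ot_R B$, is finitely generated over $R$, the same Bourbaki reference shows that $\varepsilon$ is injective. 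A direct computation from \eqref{di-mod-act} and the definition of $\om$ then verifies both that the diagram commutes and that $\om$ carries $\cent_R(B,N)\ot_R S$ into $\cent_S(B_S,N_S)$, so that $\om$ restricts to the canonical map of the statement.

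It then remains to run the diagram chase exactly as in the proof of Proposition~\ref{proppp}(b): injectivity of the restricted map follows from injectivity of $\om$, and for surjectivity one takes $\psi\in\cent_S(B_S,N_S)$, lifts it to some $\xi\in\Hom_R(B,N)\ot_R S$ via the surjective $\om$, and uses injectivity of $\varepsilon$ together with $\ga_{B_S}(\psi)=0$ to conclude that $(\ga_B\ot\Id)(\xi)=0$, i.e. $\xi\in\cent_R(B,N)\ot_R S$. I do not anticipate a genuine obstacle here, as the argument is the centroidal analogue of the derivation case already carried out above. The only points requiring care are bookkeeping ones: that the centroid is cut out by \emph{two} conditions, so the target of $\ga_B$ is a product and injectivity of $\varepsilon$ on that product must be checked (which reduces to injectivity of each factor), and that the $R$-linearity built into the definition of $\cent_R$ is matched by the hypothesis that $N$ is a $(B,R)$-dimodule, so that every map in the diagram is genuinely $R$-linear.
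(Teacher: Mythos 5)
Your proof is correct and coincides with the paper's intended argument: the paper's own proof of Lemma~\ref{cent-ext} merely says it is ``similar to \cite[Lemma~3.1]{P}'' (the case $N=B$), and that argument is exactly the finite-presentation/flatness diagram chase you carry out, modelled on Proposition~\ref{proppp}(b) with $\de_A$ replaced by your centroid-defining map $\ga_B$. Your two points of care --- the product target coming from the two centroidal conditions, and injectivity of $\varepsilon$ via the finite generation of $B\ot_R B$ --- are precisely the right bookkeeping, so nothing is missing.
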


\begin{proof} The proof is similar to that of \cite[Lemma~3.1]{P}, which deals
with the case $N=B$.   \end{proof}

\begin{cor}\label{forms-are-cent} Let $A$ be a central $k$-algebra
which is finitely presented as $k$-module, let $R\in \kalg $ be a flat
extension, $S/R$ a faithfully flat extension and let $B$ be an $S/R$-form of
$A$, i.e., $B$ is an $R$-algebra such that $B \otimes_R S$ is
isomorphic as an $S$--algebra to $(A \otimes_k R) \ot_R S \simeq A \ot_k S$.
Then $B$ is a finitely presented $R$--algebra, and the canonical map
$\chi$ of \eqref{def:cent-map} is an isomorphism. In particular, $B$ is a
central $R$--algebra.
\end{cor}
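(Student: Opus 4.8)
The plan is to establish the three assertions in the order in which they depend on one another: first that $B$ is finitely presented as an $R$-module, since this is exactly the hypothesis needed to invoke Lemma~\ref{cent-ext} for $B$; then the statement about the canonical map $\chi$; and finally centrality, which is just a reformulation of the preceding isomorphism.

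For finite presentation, I would argue by base change and descent. Since $A$ is finitely presented as a $k$-module and finite presentation is stable under arbitrary base change, $A_S = A \ot_k S$ is finitely presented as an $S$-module. The hypothesis supplies an $S$-algebra isomorphism $B_S \simeq A \ot_k S$, so the underlying $S$-module of $B_S$ is finitely presented. Because $S/R$ is faithfully flat, finite presentation of $R$-modules descends along $S/R$, and hence $B$ is finitely presented as an $R$-module.

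For the centroid, the key idea is to apply Lemma~\ref{cent-ext} twice. First, apply it with base ring $k$, algebra $A$, dimodule $N=A$, and $S$ viewed as a $k$-algebra via $k \to R \to S$; this is legitimate because $R$ is flat over $k$ and $S$ is flat over $R$, so $S$ is a flat $k$-module. The lemma then gives $\cent_k(A) \ot_k S \simeq \cent_S(A_S)$. Since $A$ is central, $\chi \co k \to \cent_k(A)$ is an isomorphism, so $\cent_k(A)\ot_k S \simeq S$; tracing the canonical maps shows the resulting isomorphism $S \to \cent_S(A_S)$ is precisely the canonical map $\chi$, so $A_S$ is central over $S$. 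Second, apply Lemma~\ref{cent-ext} with base ring $R$, algebra $B$ (finitely presented over $R$ by the previous step), dimodule $N=B$, and the flat extension $S/R$, to obtain a canonical isomorphism $\cent_R(B)\ot_R S \simeq \cent_S(B_S)$. Transporting through the $S$-algebra isomorphism $B_S\simeq A_S$, which carries $\chi^{B_S}$ to $\chi^{A_S}$, we conclude that $\chi^{B_S}$ is an isomorphism. Finally, the canonical map $\chi$ is compatible with base change: under the identification $R\ot_R S = S$ together with the isomorphism of Lemma~\ref{cent-ext}, one has $\chi^{B}\ot \Id_S = \chi^{B_S}$. As $\chi^{B_S}$ is an isomorphism and $S$ is faithfully flat over $R$, faithfully flat descent of isomorphisms forces $\chi^{B} \co R \to \cent_R(B)$ to be an isomorphism, which is exactly the assertion that $B$ is central over $R$.

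The genuinely necessary (as opposed to purely bookkeeping) points are two. The first is the faithfully flat descent of finite presentation used in the opening step, which is where the faithful flatness of $S/R$ is put to work beyond mere flatness. The second, and the more delicate, is the compatibility of the canonical map $\chi$ with base change: it is this identification $\chi^{B}\ot\Id_S = \chi^{B_S}$ that allows the abstract isomorphism produced by Lemma~\ref{cent-ext} to be recognized as the base change of the map we actually want to invert. Once these are in hand, the argument closes using the standard fact that a homomorphism of $R$-modules is an isomorphism as soon as it becomes one after a faithfully flat base change.
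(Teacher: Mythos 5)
Your proposal is correct and follows essentially the same route as the paper: the paper likewise deduces $\cent_S(A_S)\simeq \cent_k(A)\ot_k S\simeq S$ from Lemma~\ref{cent-ext} via transitivity of flatness, obtains finite presentation of $B$ by faithfully flat descent, applies Lemma~\ref{cent-ext} a second time to get $\cent_R(B)\ot_R S\simeq \cent_S(B_S)\simeq S$, and concludes by faithfully flat descent of isomorphisms. The only difference is one of explicitness: the compatibility $\chi^{B}\ot\Id_S=\chi^{B_S}$ that you verify by hand is left implicit in the paper's citation of Bourbaki, so your write-up is a slightly more detailed version of the same argument.
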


\begin{proof} Since $S/k$ is flat by transitivity of flatness, Lemma~\ref{cent-ext} shows
$\cent_S(A_S) \simeq \cent_k(A) \ot_k S \simeq S$. Next we observe that $B$
is a finitely presented $R$--algebra since finite presentation is preserved
by faithfully flat descent \cite[I, \S3.6, Prop.~11]{bou:ACa}. Hence, a
second application of Lemma~\ref{cent-ext} yields $\cent_R(B) \ot_R S \simeq
\cent_S(B_S) \simeq \cent_S(A_S) \simeq S$. Thus $\chi$ is an isomorphism by
\cite[I, \S3.1, Prop.~2]{bou:ACa}.
  \end{proof}

\section{Derivations of twisted forms with values in a dimodule}
\label{sec:deri}

\subsection{ The natural map  $\eta : \Der_k(B,N) \to
     \Der_k(R, \Cent_k(B,N))$} \label{subsec:DDD}
In this section $B$ is an $R$--algebra for some $R\in \kalg$ and $N$
is a $(B,R)$--dimodule.

\begin{prop} \label{eta-def}
{\rm (a)} For $d\in \Der_k(B,N)$ and $r\in R$ the map $\eta_{B,N}(d)(r) \co B
\to N$, defined by $\big (\eta_{B,N}(d)(r)\big)(b) \allowbreak = d(rb) - r
d(b)$, lies in $\Cent_k(B,N)$. \sm

{\rm (b)} The map
\begin{equation} \label{eta-def0}
  \eta_{B,N} \co \Der_k(B,N) \to \Der_k\big(R, \Cent_k(B,N)\big), \quad
      d \mapsto \eta_{B,N}(d)
\end{equation} is a well-defined $k$-linear map. It gives rise to the exact sequence
\begin{equation} \label{eta-def1}
 0 \to \Der_R(B,N) \to \Der_k(B,N) \xrightarrow{\eta_{B,N}}
     \Der_k\big( R, \Cent_k(B,N)\big).
\end{equation}
\end{prop}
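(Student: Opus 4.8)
The plan is to prove (a) by directly verifying the two defining centroidal identities for $\chi := \eta_{B,N}(d)(r)$, and then to deduce (b) by essentially formal bookkeeping. Throughout I will use that, since $N$ is a $(B,R)$--dimodule, both the left and the right action of $B$ on $N$ are $R$--bilinear; concretely this yields $(rb)\cdot n = r(b\cdot n) = b\cdot(rn)$ and $(rn)\cdot b = r(n\cdot b) = n\cdot(rb)$ for all $r\in R$, $b\in B$, $n\in N$ (these relations contain \eqref{der-R-mod1} and \eqref{cent-perf1}). I will also use that $B$ is an $R$--algebra, so that $r(b_1b_2) = (rb_1)b_2 = b_1(rb_2)$.

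For (a), fix $d$ and $r$ and write $\chi(b) = d(rb) - rd(b)$, which is visibly $k$--linear. To check $\chi(b_1b_2) = \chi(b_1)\cdot b_2$, I expand $\chi(b_1b_2) = d\big((rb_1)b_2\big) - rd(b_1b_2)$ by applying the Leibniz rule to $d$ in both terms, and then move the scalar $r$ across the actions using the $R$--bilinearity relations above. The terms $d(rb_1)\cdot b_2$ and $(rd(b_1))\cdot b_2$ assemble into $\chi(b_1)\cdot b_2$, while the remaining two terms cancel because $(rb_1)\cdot d(b_2) = r\big(b_1\cdot d(b_2)\big) = b_1\cdot\big(rd(b_2)\big)$. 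The identity $\chi(b_1b_2) = b_1\cdot\chi(b_2)$ is then proved by the mirror computation, now writing $r(b_1b_2) = b_1(rb_2)$ and invoking the cancellation $d(b_1)\cdot(rb_2) = (rd(b_1))\cdot b_2$, which comes from the $R$--bilinearity of the right action. Hence $\chi\in\Cent_k(B,N)$.

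For the first part of (b) I regard $\Cent_k(B,N)$ as an $(R,k)$--dimodule via the $R$--bimodule structure of Lemma~\ref{der-R-mod}, i.e.\ $(r\cdot f)(b) = r\big(f(b)\big)$ and $(f\cdot r)(b) = f(rb)$ as in \eqref{referee1}. To see that $\eta_{B,N}(d)\in\Der_k\big(R,\Cent_k(B,N)\big)$ I verify the Leibniz rule $\eta_{B,N}(d)(r_1r_2) = \eta_{B,N}(d)(r_1)\cdot r_2 + r_1\cdot\eta_{B,N}(d)(r_2)$ by evaluating both sides at an arbitrary $b\in B$: the right-hand side produces $\big(d(r_1r_2b) - r_1 d(r_2b)\big) + \big(r_1 d(r_2b) - r_1r_2 d(b)\big)$, in which the two copies of $r_1 d(r_2b)$ cancel to leave exactly $\eta_{B,N}(d)(r_1r_2)(b)$. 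The $k$--linearity of $r\mapsto\eta_{B,N}(d)(r)$ and of $d\mapsto\eta_{B,N}(d)$ is immediate from the defining formula.

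Finally, for the exact sequence \eqref{eta-def1} it remains to identify the kernel of $\eta_{B,N}$. By definition $\eta_{B,N}(d) = 0$ means $d(rb) = rd(b)$ for all $r\in R$ and $b\in B$, i.e.\ $d$ is $R$--linear; since the Leibniz rule defining a derivation is unchanged, this says precisely that $d\in\Der_R(B,N)$, embedded in $\Der_k(B,N)$ by the obvious (injective) inclusion. This yields exactness at $\Der_R(B,N)$ and at $\Der_k(B,N)$. I expect the only genuinely delicate point to be the bookkeeping in part (a): one must apply the correct one of the four $R$--bilinearity relations at each step so that the two cross terms cancel, and it is exactly the full $(B,R)$--dimodule hypothesis (rather than \eqref{der-R-mod1} and \eqref{cent-perf1} alone) that supplies the relation $n\cdot(rb) = r(n\cdot b)$ needed for the second centroidal identity.
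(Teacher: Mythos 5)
Your proof is correct and takes essentially the same approach as the paper's: the same Leibniz expansions with cancellation of the cross terms via the $(B,R)$--dimodule relations in (a) (the paper verifies the identity $\bar d(b_1b_2)=b_1\cdot \bar d(b_2)$ explicitly, using exactly the relation $d(b_1)\cdot(rb_2)=r\big(d(b_1)\cdot b_2\big)$ that you correctly flag as the one not contained in \eqref{der-R-mod1} and \eqref{cent-perf1}), and the same add-and-subtract of $r_1 d(r_2 b)$ in (b). Your explicit identification of $\Ker \eta_{B,N}$ with $\Der_R(B,N)$ merely spells out what the paper declares immediate.
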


\begin{proof} We denote $\eta_{B,N}$ by $\eta$. (a) Let $r\in R$ and put
$\bar d = \eta(d)(r)$. Then for all $b_1, b_2\in B$ we have
\begin{align*}
 \bar d(b_1 b_2) &= d(rb_1 b_2) - r d(b_1 b_2)
             = d(b_1 (rb_2)) - r d(b_1 b_2) \\
   &= b_1 \cdot d(rb_2) + d(b_1) \cdot (rb_2) - r (d(b_1) \cdot b_2)
          - r\big(b_1 \cdot d(b_2)\big) \\
  & = b_1 \cdot d(rb_2) - b_1 \cdot \big( rd(b_2)\big)
       = b_1 \cdot \big(\bar d(b_2) \big)
         \end{align*}
where we used $d(b_1) \cdot (rb_2) = r (d(b_1) \cdot b_2)$ since $N$ is a
$(B,R)$--dimodule. One can similarly show $\bar d(b_1 b_2) = \big( \bar
d(b_1) \big) \cdot b_2$, thus proving that $\bar d \in \cent_k(B,N)$.

In (b) we first verify that $\eta(d)=: \tilde d$ is a derivation, which means
$\tilde d (r_1r_2) = \tilde d (r_1) \cdot  r_2 + r_1 \cdot \tilde d(r_2)$ in
$\Cent_k(B,N)$. For $b\in B$ we get in view of Lemma~\ref{der-R-mod}
\begin{align*}
 \big( \tilde d(r_1 r_2)\big)(b) &= d(r_1 r_2 b) - r_1 r_2 d(b)
   = d(r_1 r_2 b) - r_1 d(r_2 b)  + r_1 d(r_2 b) - r_1 r_2 d(b)\\
  & =\big(\tilde d(r_1) \cdot r_2 \big)(b) + \big( r_1 \cdot \tilde d(r_2)\big)(b),
\end{align*}
which proves our claim. We therefore have a well-defined $k$--linear map
\[ \eta : \Der_k(B,N) \to \Der_k\big( R, \Cent_k(B,N)\big).\]
 That $\eta$ also
has the other property stated in (b) is then immediate.
  \end{proof}

\subsection{A section of $\eta$: Untwisted case}
The map $\eta$ of Proposition~\ref{eta-def} admits a natural section whenever
the algebra $B$ comes from $k$. More precisely:\footnote{The change of
notation from section~\ref{subsec:DDD} replacing $B$ by $A$, $R$ by $k$ and $N$ by $M$ is put into place to match future references.}

\begin{lemma} \label{eta-split} Let $A$ be a perfect $k$--algebra, $S\in \kalg$
and $M$ an $(A_S, S)$--dimodule. Then
\[ \si_{A_S, M} \co \Der_k \big(S, \Cent_k (A_S, M)\big) \to \Der_k (A_S, M), \quad
    \si_{A_S, M}(d)(a \ot s)= d(s)(a\ot 1_S) \]
is a well-defined $k$--linear map and a section of the map $\eta_{A_S, M}$ of
\eqref{eta-def0}, in particular $\eta_{A_S, M}$ is surjective and
\begin{equation} \label{eta-split1}
    \Der_k (A_S, M) \simeq \Der_S (A_S, M) \oplus
                \Der_k \big(S, \Cent_k (A_S, M)\big)
\end{equation}
as $S$-modules.
\end{lemma}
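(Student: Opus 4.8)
The plan is to establish the three assertions in order — that $\si_{A_S,M}$ is well defined, that it takes values in $\Der_k(A_S,M)$, and that it is a section of $\eta_{A_S,M}$ — and then to read off the splitting \eqref{eta-split1} from the exact sequence \eqref{eta-def1}. The single structural input I will lean on throughout is that $A_S=A\ot_k S$ is a \emph{perfect} $S$-algebra, since $A$ is perfect and perfectness is preserved under base change. Consequently Lemma~\ref{cent-perf} applies to the $(A_S,S)$-dimodule $M$: every element of $\Cent_k(A_S,M)$ is $S$-linear, and the left and right $S$-module structures on $\Cent_k(A_S,M)$ from \eqref{referee1} therefore coincide.

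First, well-definedness. For fixed $d$, the assignment $(a,s)\mapsto d(s)(a\ot 1_S)$ is $k$-bilinear — $k$-linear in $a$ because each $d(s)$ is $k$-linear, and $k$-linear in $s$ because $d$ is $k$-linear — so it factors through $A\ot_k S$ and yields a well-defined $k$-linear map $\si_{A_S,M}(d)\co A_S\to M$. Linearity of $\si_{A_S,M}$ in $d$ is immediate from the same formula.

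Next I will verify that $\delta:=\si_{A_S,M}(d)$ is a derivation. Since both sides of the Leibniz identity are $k$-bilinear in their two arguments, it suffices to test it on elementary tensors $a_1\ot s_1$ and $a_2\ot s_2$, whose product is $a_1a_2\ot s_1s_2$. Expanding $\delta(a_1a_2\ot s_1s_2)=d(s_1s_2)(a_1a_2\ot 1_S)$ via the Leibniz rule for $d$ in the $S$-bimodule $\Cent_k(A_S,M)$, and expanding $\delta(a_1\ot s_1)\cdot(a_2\ot s_2)+(a_1\ot s_1)\cdot\delta(a_2\ot s_2)$ via the centroidal identity satisfied by $d(s_1)$ and $d(s_2)$, the two computations agree term by term. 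The matching of the second pair of terms is precisely where perfectness enters: it requires $d(s_2)(s_1\cdot x)=s_1\,d(s_2)(x)$, i.e. the $S$-linearity of the centroidal map $d(s_2)$ supplied by Lemma~\ref{cent-perf}. This is the step I expect to be the crux; the remainder is formal.

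Finally, for the section property I evaluate $\eta_{A_S,M}(\delta)(s)$ on an elementary tensor $a\ot t$: by definition it equals $\delta(a\ot st)-s\,\delta(a\ot t)=d(st)(a\ot 1_S)-s\,(d(t)(a\ot 1_S))$, and applying the Leibniz rule $d(st)=d(s)\cdot t+s\cdot d(t)$ together with $(d(s)\cdot t)(a\ot 1_S)=d(s)(a\ot t)$ collapses this to $d(s)(a\ot t)$; hence $\eta_{A_S,M}\circ\si_{A_S,M}=\Id$. A short check shows $\si_{A_S,M}$ and $\eta_{A_S,M}$ are $S$-linear for the natural $S$-module structures — on $\Der_k(A_S,M)$ by Lemma~\ref{der-R-mod}(c), and on $\Der_k\big(S,\Cent_k(A_S,M)\big)$ by acting on values — so the now split short exact sequence \eqref{eta-def1} delivers the decomposition \eqref{eta-split1} as $S$-modules, with $\Der_S(A_S,M)=\Ker\eta_{A_S,M}$.
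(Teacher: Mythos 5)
Your proposal is correct and follows essentially the same route as the paper's proof: $k$-balancedness for well-definedness, a term-by-term Leibniz check on elementary tensors, and the same one-line verification that $\eta_{A_S,M}\circ\si_{A_S,M}=\Id$. Your identification of the crux is exactly the paper's, since the paper's step ``$s_1\cdot d(s_2)=d(s_2)\cdot s_1$ by Lemma~\ref{cent-perf}'' is precisely the $S$-linearity of the centroidal map $d(s_2)$ that you invoke (resting, as you note, on $A_S$ being a perfect $S$-algebra by base change), and your explicit check of $S$-linearity of $\si$ and $\eta$ for the splitting is if anything slightly more careful than the paper, which leaves that point implicit.
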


\begin{proof} The map $A \times S \to M$, $(a,s) \mapsto d(s)(a \ot 1_S)$ is
well-defined and $k$-balanced, hence gives rise to a well-defined $k$--linear
map $\si_{A_S, M}(d)$ which we denote by $\si(d)$. Next we check that $\si(d)$ is a derivation.
With the obvious notation we have
 \begin{align*}
    \si(d)\big( (a_1 \ot s_1) (a_2 \ot s_2)\big)
      &= \si(d)(a_1 a_2 \ot s_1 s_2) = d(s_1 s_2)(a_1 a_2 \ot 1_S)\\
  &= \big( d(s_1) \cdot s_2 + s_1 \cdot d(s_2) \big) (a_1 a_2 \ot 1_S)
  \\ &= d(s_1) (a_1 a_2 \ot s_2) + s_1 \cdot d(s_2)(a_1a_2 \ot 1_S)
  \\  &= d(s_1) (a_1 a_2 \ot s_2) + d(s_2)(a_1a_2 \ot s_1) \\
  &\qquad \hbox{(since $s_1 \cdot d(s_2)= d(s_2) \cdot s_1$
         by Lemma~\ref{cent-perf})}  \\
 & = d(s_1) \big(  (a_1 \ot 1_S) (a_2 \ot s_2) \big)
      + d(s_2) \big( (a_1 \ot s_1) (a_2 \ot 1_S)\big)
\\ &= \big( d(s_1)(a_1 \ot 1_S)\big)\cdot (a_2 \ot s_2)
      + (a_1 \ot s_1) \cdot \big( d(s_2) (a_2 \ot 1_S)\big)
 \\ &= \big( \si(d)(a_1 \ot s_1)\big) \cdot (a_2 \ot s_2)
        + (a_1 \ot s_1) \cdot \big( \si(d) (a_2 \ot s_2) \big).
  \end{align*}
Let $\eta= \eta_{A_S, M}$. Next we verify that $\si$ is a section of $\eta$,
i.e., $\big(( \eta \circ \si) (d)\big)(s_1) = d(s_1)$ when evaluated on $a\ot
s_2 \in A\ot_k S$:
\begin{align*}
  \big((  (\eta \circ \si)(d))(s_1) \big)(a \ot s_2) &=
     \si(d)\big (  s_1  (a \ot s_2)\big)
              - s_1  \big(\si(d)(a\ot s_2) \big)
 \\ &\qquad \hbox{(by the definition of $\eta$)}
 \\ &= \si(d)(a \ot s_1s_2) - s_1  \big(d(s_2)(a \ot 1_S)\big)
 \\ &= d(s_1s_2) (a\ot 1_S) - (s_1 \cdot d(s_2))(a \ot 1_S)
 \\ & = (d(s_1) \cdot s_2)(a\ot 1_S) = d(s_1) (a \ot s_2).  
  \end{align*}
  \end{proof}

\begin{rem}\label{azam} We describe how this lemma relates to previously obtained
results in the case when $M=A_S$, considered as natural $(A_S, S)$--dimodule
as in Example~\ref{trivial}. In this case \eqref{eta-split1} becomes
\begin{equation}\label{azam1}
\Der_k(A_S) \simeq \Der_S(A_S) \oplus \Der_k\big( S, \cent_k(A_S)\big)
\end{equation}
In \cite{Azam:tensor} S. Azam considers perfect algebras over a field $k$ and
$S\in \kalg$ for which the canonical map $ \cent_k(A) \ot_k S \to \cent_k(A
\ot_k S)$ is an isomorphism of $S$--algebras. 
Assuming this, we get
\begin{equation}\label{azam2}
\Der_k(A_S) \simeq \Der_S(A_S) \oplus \Der_k\big( S, \cent_k(A)\ot_k S \big),
\end{equation}
a decomposition which coincides with \cite[Th.~2.9]{Azam:tensor}. In
particular, if $A$ is finite-dimensional, we have $\Der_k(S, \cent_k(A) \ot_k
S) \simeq \cent_k(A) \ot_k \Der_k(S)$, thus using Proposition~\ref{proppp}
\begin{equation} \label{azam3} \begin{split}
  \Der_k(A_S) \simeq \Der_S(A_S) &\rtimes \big( \Cent_k (A) \ot_k \Der_k(S) \big)
    \\  \simeq \big( \Der_k(A) \ot_k S \big)  &\rtimes \big( \Cent_k(A) \ot_k \Der_k(S) \big).
\end{split}\end{equation}
The isomorphism \eqref{azam3} had been previously established by
Benkart-Moody \cite[Th.~1.1]{bemo} and Block \cite[Th.~7.1]{Block}.
\end{rem}
\sm

\subsection{A section of $\eta$ : Twisted case} \label{subsec:twi}
Our goal is to extend \eqref{eta-split1} to the setting of \'etale forms of
$A \otimes_k R$, namely algebras $B$ over $R$ such that $B \otimes_R S$ is
isomorphic as an $S$--algebra to $(A \otimes_k R) \ot_R S \simeq A \ot_k S$
for some {\em \'etale covering\/} $S/R$, by which we mean that $S/R$ is an
\'etale extension, i.e., flat and unramified, which is also faithfully flat.\footnote{It would be more
correct that $\Spec(S)$ is a covering of $\Spec(R)$ on the \'etale site
of $\Spec(R)$.} We pause to remind the reader that up to $R$--isomorphism we
may assume that $B$ is an $R$--subalgebra of $A \ot_k S$: The algebra $B$ can
be defined in terms of cocycles, just as one does in usual Galois cohomology
(see \cite[II]{knus-ojan} and \cite{P} for details and references).

\begin{blank} \label{goal}
Recall that $k$ denotes our base ring.  We will make the following natural
assumptions, analogous in spirit to those made in \cite[Th.~4.2]{P}.
\begin{enumerate}
      \item[(i)]  $A$ is a perfect  $k$--algebra which is finitely
          presented (in particular of finite type) as a $k$--module.
   \item[(ii)] $R\in \kalg$ is a flat extension  of $k$.
   \item[(iii)] $S \in \Ralg$ is an \'etale covering  of $R$.
 \item[(iv)] $B \subset A_S = A\ot_k S$ is an $(S/R)$-form of $A_R$.

   \item[(v)] $N$ is a $(B,R)$--dimodule.
\end{enumerate}
Our goal is to describe the nature of $\Der_k(B,N)$. To do this we need the
following additional crucial assumption. As we see later in
Lemma~\ref{rcg-suff}, the assumption is fulfilled in the most important case
of a finite \'etale covering, in other words when $B$ is isotrivial.
 \begin{enumerate}

     \item[(vi)] There exists an $R$-linear map $\pi : N_S = N \ot_R S
         \to N$ which satisfies
\begin{equation} \label{rcg-assume} \pi(b \cdot m) = b \cdot \pi(m) \quad
\hbox{and} \quad \pi(m \cdot b) = \pi(m) \cdot b
 \quad \hbox{for $b\in B$ and $m\in N_S$}
\end{equation}
where the $(B, R)$--dimodule structure of $N_S$ is given by
\eqref{di-mod-act}.
  \end{enumerate}
\end{blank}

We will immediately put assumption (iii) to good use to show how the
description of $\Der_k(B,N)$ becomes a descent problem.

\begin{prop} \label{etale-extension}  Let $P$ be an $R$--module. Then there
exists a canonical injective map
\begin{equation} \label{etale-extension1a} \varepsilon_P : \Der_k(R, P) \to
   \Der_k(S, P_S).
\end{equation}
This map is such that after identifying $R$ and $P$ with subsets of $S$ and
$P_S$ respectively we have
\begin{equation} \label{etale-extension1b}
   \big( \varepsilon_P(d) \big)(r) = d(r)
 \end{equation}
for $d\in \Der_k(R,P)$ and $r\in R$.
\end{prop}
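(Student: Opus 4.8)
The plan is to reduce the statement to the standard fact that an \'etale extension induces a bijection on derivation modules. Since $R$ is commutative and $P$ an $R$--module, a map $d\in\Der_k(R,P)$ is just an ordinary $k$--linear derivation, the left and right actions of $R$ on $P$ both being the module action; likewise $\Der_k(S,P_S)$ is an ordinary derivation module, $P_S = P\ot_R S$ being an $S$--module. The idea is to compose $d$ with the canonical map $\iota\co P\to P_S$, $p\mapsto p\ot 1_S$, and to extend the resulting derivation $\iota\circ d\co R\to P_S$ uniquely to a $k$--derivation of $S$ with values in $P_S$. Setting $\varepsilon_P(d)$ equal to this unique extension gives a $k$--linear map (a composite of $k$--linear maps) which by construction satisfies \eqref{etale-extension1b}, the uniqueness of the extension accounting for its canonicity.

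The key step is the existence and uniqueness of the extension. For any $S$--module $M$ I would use the identifications $\Der_k(S,M)=\Hom_S(\Om_{S/k},M)$ and, via the extension--restriction adjunction, $\Der_k(R,M)=\Hom_R(\Om_{R/k},M)=\Hom_S(\Om_{R/k}\ot_R S,M)$, the last because $M$ is an $S$--module. Applying the left-exact functor $\Hom_S(-,M)$ to the cotangent (second fundamental) exact sequence
\[ \Om_{R/k}\ot_R S \xrightarrow{\ \al\ } \Om_{S/k} \to \Om_{S/R}\to 0 \]
identifies the restriction map $\Der_k(S,M)\to\Der_k(R,M)$ with $\al^*$. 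Because $S/R$ is \'etale, it is in particular unramified, so $\Om_{S/R}=0$, and $\al$ is an isomorphism; hence restriction is a bijection. Taking $M=P_S$ and defining $\varepsilon_P(d)$ to be the preimage of $\iota\circ d$ under this bijection yields the desired derivation, and \eqref{etale-extension1b} simply records that it restricts to $\iota\circ d$ on $R$.

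For injectivity I would observe that $\varepsilon_P$ is the composite of $d\mapsto\iota\circ d$ with the bijection above, so it suffices that $\iota\co P\to P_S$ be injective. This holds because $S/R$ is faithfully flat, whence $P\to P\ot_R S$ is injective. Thus $\iota\circ d=0$ forces $d=0$, and $\varepsilon_P$ is injective.

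The main obstacle is the bijectivity of the restriction map, that is, the assertion that $\al$ is an isomorphism; this is exactly the point where \'etaleness (formal \'etaleness) is indispensable, and it must be extracted from the structure theory of \'etale morphisms over the arbitrary base $k$. One must also be careful that the coefficient module is consistently the $S$--module $P_S$ throughout, so that the adjunction identification of $\Der_k(R,P_S)$ with $\Hom_S(\Om_{R/k}\ot_R S,P_S)$ is compatible with restriction. By contrast, the injectivity is routine, depending only on faithful flatness.
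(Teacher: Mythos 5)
Your argument is correct, and it supplies in full the step the paper merely cites: the paper's own proof consists of the faithful-flatness identifications of $R$ with a subring of $S$ and $P$ with a subset of $P_S$, followed by an appeal to EGA~IV, Ch.~0, \S 20 for the ``well-known'' fact that $k$-derivations extend uniquely along \'etale extensions. Your reconstruction of that fact --- via $\Der_k(S,M)\simeq \Hom_S(\Omega_{S/k},M)$, the second fundamental sequence, and the isomorphism $\alpha\co \Omega_{R/k}\otimes_R S \to \Omega_{S/k}$, applied with $M=P_S$ after composing $d$ with $\iota\co P\to P_S$ --- is precisely the content of that reference, and your use of faithful flatness to get injectivity of $\iota$, hence of $\varepsilon_P$, matches the paper's identification step; so the two routes are the same in substance, with yours buying self-containedness at the cost of importing the theory of K\"ahler differentials. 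One point of precision: in your second paragraph the inference ``$S/R$ is unramified, so $\Omega_{S/R}=0$, and $\alpha$ is an isomorphism'' is too quick, since vanishing of $\Omega_{S/R}$ yields only surjectivity of $\alpha$; injectivity (indeed split injectivity of the sequence) uses the formally smooth half of \'etaleness. You do acknowledge in your closing paragraph that full (formal) \'etaleness is what is indispensable, so this is a lapse of wording rather than a gap, but the isomorphism should be quoted as a single statement about (formally) \'etale maps rather than deduced from unramifiedness alone.
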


\begin{proof} Since $S/R$ is faithfully flat (being a covering), we can canonically
identify $R$ (respectively $P$) with a subset of $S$ (respectively $P_S$).
Since $S/R$ is also \'etale, the result is well-known, see \cite[Ch.0,
\S20]{EGAIV}.
  \end{proof}
\ms

By Lemma~\ref{perf-descent}, $B$ is perfect. It is also finitely presented as
explained in the proof of Corollary~\ref{forms-are-cent}.  From
Lemma~\ref{cent-perf} and Lemma~\ref{cent-ext} together with our assumptions
it follows that
\[ \Cent_k (B,N) \ot_R S = \Cent_R (B,N) \ot_R S \simeq
     \Cent_S (B_S, N_S) \simeq \Cent_k (A_S, N_S).
\]
Hence, using Proposition~\ref{etale-extension}, we have an injective
map \begin{equation}
 \label{goalIIa} \rh_N = \rh_{\Cent_k(B,N)} :
    \Der_k \big( R, \Cent_k (B,N) \big) \hookrightarrow
   \Der_k \big( S, \Cent_k (A_S, N_S)\big) \end{equation}
The situation that we are presently at, can be summarized by the following
diagram.\footnote{``What can be shown cannot be said" L. Wittgenstein.}
\begin{equation} \label{goalIIb}
 \vcenter{\xymatrix{
    0 \ar[r] & \Der_S  (A_S, N_S) \ar[r]
       & \Der_k (A_S, N_S)  \ar@{~>}[d]
    \ar[r]_>>>{\eta_{S}} & \Der_k \big( S, \Cent_k (A_S, N_S) \big)
     \ar@/_1pc/[l]_{\si_{S}} \ar[r] &0 \\
   0 \ar[r] & \Der_R (B, N) \ar[r] & \Der_k (B,N) \ar[r]_>>>>>{\eta_B}
     & \Der_k \big( R, \Cent_k (B,N)\big) \ar[u]_{\varepsilon_N}
       \ar@{.>}@/_1pc/[l]_{\si}   } }
\end{equation}
where we have abbreviated $\eta_S = \eta_{A_S, N_S}$, $\eta_B = \eta_{B,N}$
and $\si_S = \si_{A_S, N_S}$. The exactness of the rows follows from
Proposition~\ref{eta-def}, and the splitting of $\eta_S$ from
Lemma~\ref{eta-split}. Recall that by the faithfull flatness of $S/R$ there
is no harm to assume $R \subset S$ and $N \subset N_S$. \sm

Our immediate goal is to  construct the dotted arrow $\si$. We will do this
by going to $\Der_k (A_S, N_S)$ using $\si_S \circ \rh_N$ and then require
that the derivations obtained in this way map $B$ to $N$ as indicated by the arrow $\rightsquigarrow$ (recall that we know
$B \subset A_S$ and $N \subset N_S$). Thus we need the condition
\begin{equation} \label{goalIIc}
 \big( (\si_S \circ \rh_N)(d) \big) (B) \subset N
 \quad \hbox{for all $d\in  \Der_k \big( R, \Cent_k (B,N)\big)$}
\end{equation}
which we will establish in our main theorem below.

\begin{rem}\label{descentdata} In the present situation the
diagram (\ref{goalIIb}) can be thought of as descent data, while (\ref{goalIIc})
is the  condition for the descent data to be effective. \end{rem}

The $R$--linear map $\pi : N_S \to N$ satisfying \eqref{rcg-assume} leads to
two very important $k$--linear maps: \sm

{\rm (1)} The {\it restriction map}
\begin{equation} \label{rcg1}
  \rho : \Hom_k(A_S, N_S) \to \Hom_k(B, N), \quad
       \rho(f) : b  \mapsto (\pi \circ f)(b).
\end{equation}

{\rm (2)} The {\it double restriction  map} (shown to be well-defined in the
next theorem)
\begin{equation} \label{rcg1a}
\tilde{\rho} : \Der_k \big( S, \cent_k(A_S, N_S) \big) \to \Der_k \big(R, \cent_k(B,
N)\big),\;  \tilde{\rho}(d) : r \mapsto  \rho(d(r)). \end{equation}

\begin{theorem}\label{main} We assume {\rm (i)--(iv)} of\/ {\rm \ref{goal}}. Then
\sm

{\rm (a)} The restriction map {\rm \eqref{rcg1}} preserves derivations and centroidal
transformations:
\[
   \rho\big(\Der_k (A_S, N_S) \big) \subset
       \Der_k(B, N) \quad \hbox{and}
       \quad  \rho\big(\cent_k (A_S, N_S) \big) \subset
       \cent_k(B, N).
\]

{\rm (b)} The restriction of\/ $\rho$ to $\cent_k(A_S, N_S)$ is an
$R$--bimodule homomorphism with respect to the $R$--bimodule structures of
$\cent_k(A_S, N_S)$ and $\cent_k(B, N)$ of Lemma~{\rm
\ref{der-R-mod}:}\footnote{Since  $B$ is perfect, Lemma~\ref{cent-perf} shows
that the two $R$--module structures coincide.}
\begin{equation}\label{rcg0}
   \rho(r \cdot \chi) = r \cdot \rho(\chi) \quad \hbox{and} \quad
    \rho(\chi \cdot r) = \rho(\chi) \cdot r
 \end{equation}
for $r\in R$ and $\chi \in \cent_k(A_S, N_S)$.\sm

{\rm (c)} The double restriction map {\rm \eqref{rcg1a}} is well-defined and satisfies
\begin{equation} \label{rc4a} \eta_B \circ \rho \circ \si_S =
\tilde{\rho}\end{equation} for $\eta_B$ and $\si_S$ as in {\rm \eqref{goalIIb}.} \sm

{\rm (d)} With $\eps_N$ as in {\rm \eqref{goalIIa}} the map $\si = \rho \circ \si_S \circ \varepsilon_N$ is a section
of $\eta_B$, whence \begin{equation} \label{rcg6}
 \Der_k(B,N) \simeq \Der_R(B,N)
\oplus \Der_k\big( R, \Cent_k(B,N)\big). \end{equation}

{\rm (e)} Summarizing, we have the following diagram.
\begin{equation} \label{rcg5}
 \vcenter{\xymatrix@C=1.5pc{
    0 \ar[r] & \Der_S (A_S, N_S) \ar[r]
       & \Der_k(A_S, N_S)  \ar[r]_>>>{\eta_{S}}\ar[d]_\rho
         & \Der_k\big( S, \Cent_k(A_S, N_S) \big)
     \ar@/_1pc/[l]_{\si_{S}}  \ar@<1ex>[d]^{\tilde{\rho}}
     \ar[r] &0 \\
 0 \ar[r] & \Der_R(B, N) \ar[r] & \Der_k(B,N)
     \ar[r]_>>>>>{\eta_B} & \Der_k\big(R, \Cent_k(B,N) \big)
        \ar@<1ex>[u]^{\varepsilon_N}  \ar@/_1pc/[l]_{\si}   } }
\end{equation}
\end{theorem}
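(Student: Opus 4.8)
The plan is to build the two restriction maps $\rho$ and $\tilde\rho$ out of the $R$-linear map $\pi\co N_S\to N$ of assumption (vi) (so I also invoke (v), which makes $N$ a $(B,R)$-dimodule) and to prove the five assertions in the order (a), (b), (c), (d), with (e) a mere summary. Two standing reductions are available throughout. First, by faithful flatness I identify $R\subset S$, $N\subset N_S$ and $B\subset A_S$, and I use that the restriction $\pi$ restricts to the identity on $N\subset N_S$. Second, since $A$ is perfect, so is $A_S=A\ot_k S$, and $B$ is perfect by Lemma~\ref{perf-descent}; hence by Lemma~\ref{cent-perf} every $k$-centroidal transformation of $A_S$ is already $S$-linear and every one of $B$ is $R$-linear, and the two $R$-bimodule structures on $\cent_k(A_S,N_S)$ and on $\cent_k(B,N)$ coincide.

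For (a) I take $f\in\Der_k(A_S,N_S)$; since $f(b)\in N_S$ for $b\in B$, the value $\rho(f)(b)=\pi(f(b))$ lies in $N$. Expanding $\rho(f)(b_1b_2)=\pi\big(f(b_1)\cdot b_2+b_1\cdot f(b_2)\big)$ by the Leibniz rule in $A_S$ and applying additivity of $\pi$ together with the two relations \eqref{rcg-assume}, I obtain $\rho(f)(b_1)\cdot b_2+b_1\cdot\rho(f)(b_2)$, so $\rho(f)\in\Der_k(B,N)$; the same computation with one Leibniz term suppressed shows $\rho(\chi)\in\cent_k(B,N)$ for centroidal $\chi$. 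Part (b) is then nearly formal: the right-hand equality of \eqref{rcg0} holds because both $\rho(\chi\cdot r)(b)$ and $(\rho(\chi)\cdot r)(b)$ unwind to $\pi(\chi(rb))$ (here $rb\in B$ since $B$ is an $R$-subalgebra), while the left-hand equality is exactly the $R$-linearity of $\pi$.

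With (a) and (b) available, (c) has two parts. Well-definedness of $\tilde\rho$: for $d\in\Der_k\big(S,\cent_k(A_S,N_S)\big)$ one has $\tilde\rho(d)(r)=\rho(d(r))\in\cent_k(B,N)$ by (a), and feeding $d(r_1r_2)=d(r_1)\cdot r_2+r_1\cdot d(r_2)$ through the bimodule-homomorphism property (b) yields the Leibniz rule for $\tilde\rho(d)$. For the identity \eqref{rc4a} I evaluate on $r\in R$ and $b\in B$, writing $e=\si_S(d)$: by the definitions of $\eta_B$ and $\rho$ and the $R$-linearity of $\pi$, the value $\big(\eta_B(\rho(e))(r)\big)(b)$ equals $\pi\big(e(rb)-r\cdot e(b)\big)=\pi\big((\eta_S e)(r)(b)\big)$, and since $\si_S$ is a section of $\eta_S$ (Lemma~\ref{eta-split}) this in turn equals $\pi\big(d(r)(b)\big)=\rho(d(r))(b)=\tilde\rho(d)(r)(b)$, as required.

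Finally (d): composing \eqref{rc4a} with $\varepsilon_N$ gives $\eta_B\circ\si=\tilde\rho\circ\varepsilon_N$. By \eqref{etale-extension1b} one has $\varepsilon_N(d)(r)=d(r)$ for $r\in R$ under the canonical embedding $\cent_k(B,N)\hookrightarrow\cent_k(A_S,N_S)$, so $\tilde\rho(\varepsilon_N(d))(r)=\rho(d(r))$, and the key point is that $\rho$ inverts this embedding, i.e.\ $\rho(d(r))=d(r)$. Granting this, $\eta_B\circ\si=\Id$, so $\si$ is a section; surjectivity of $\eta_B$ and the splitting \eqref{rcg6} then follow from the exact sequence \eqref{eta-def1} of Proposition~\ref{eta-def}, and (e) merely records the diagram \eqref{rcg5}. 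I expect the inversion $\rho(d(r))=d(r)$ to be the main obstacle: it requires checking that $\rho$ is compatible with the identification $\cent_k(B,N)\ot_R S\simeq\cent_S(B_S,N_S)\simeq\cent_k(A_S,N_S)$ assembled from Lemmas~\ref{cent-perf} and~\ref{cent-ext} and the form structure $B_S\simeq A_S$. Tracing $\chi\in\cent_k(B,N)$ through this identification and evaluating at $b\in B\subset B_S=A_S$ produces $\chi(b)\ot 1_S\in N_S$, whence one needs $\pi(\chi(b)\ot 1_S)=\chi(b)$; keeping the four nested inclusions $R\subset S$, $N\subset N_S$, $B\subset A_S$ and $\cent_k(B,N)\subset\cent_k(A_S,N_S)$ consistent is where the real care is needed.
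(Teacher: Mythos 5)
Your proposal is correct and follows the paper's proof essentially verbatim: parts (a)--(c) are the same direct computations pushing $\pi$ through the Leibniz and centroidal identities, and (d) is obtained, exactly as in the paper, from \eqref{rc4a} and \eqref{etale-extension1b} via $\eta_B \circ \si = \tilde{\rho} \circ \varepsilon_N = \Id$. The standing normalization $\pi|_N = \Id_N$ that you introduce, and rightly flag as the crux of (d), is precisely what the paper uses tacitly at that step: it does not follow from \ref{goal}(vi) as literally stated (e.g.\ $\pi = 0$ satisfies (vi) but gives $\si = 0$), yet it holds for the projections constructed in Lemma~\ref{rcg-suff}, which is the intended reading, so your extra care here is a genuine, if small, improvement in transparency rather than a deviation.
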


\begin{proof} (a) Let $d\in \Der_k(A_S, N_S)$ and put $\tilde d = \rho(d)$.
For $b_i \in B$ we then get, using \eqref{rcg-assume},
\begin{align*}
   \tilde d(b_1 b_2) &= \pi\big( d(b_1 b_2)\big)
      = \pi\big( d(b_1) \cdot  b_2\big)  + \pi\big( b_1 \cdot d(b_2) \big)
     \\ &= \pi\big( d(b_1) \big) \cdot b_2 + b_1 \cdot \pi\big( d(b_2) \big)
     = \tilde d (b_1) \cdot b_2 + b_1 \cdot \tilde d(b_2).
 \end{align*}
This shows  $\rho\big(\Der_k (A_S, N_S) \big) \subset \Der_k(B, N)$. The
second inclusion can be proven in a similar fashion: Let $\chi
\in \cent_k(A_S, N_S)$ and put $\tilde \chi = \rho(\chi)$. Then
 \[
    \tilde \chi (b_1 b_2) = \pi\big( \chi (b_1  b_2)\big)
    = \pi\big( \chi(b_1) \cdot b_2\big) = \pi\big(\chi(b_1)\big) \cdot b_2
    = \tilde \chi(b_1) \cdot b_2.\]
The equation $\tilde \chi (b_1 b_2) = b_1 \cdot \tilde \chi(b_2)$ follows in
the same way. \sm

(b) For the proof of \eqref{rcg0} we use that $\pi$ is $R$--linear. We have
for $\chi \in \cent_k(A_S, N_S)$ and $b\in B$
\begin{align*}
  \big( \rho(r \cdot \chi) \big)(b)
   &= \pi \big( (r \cdot \chi) (b) \big)
   = \pi \big(  r \cdot (\chi(b))\big)
   = r \cdot \pi\big( \chi(b)\big)
   = r \cdot \big(\rho(\chi)(b)\big)
  \\ & = \big(r \cdot \rho(\chi)\big)(b), \\
  \big( \rho(\chi \cdot r) \big)(b)
   &= \pi\big(  (\chi \cdot r)(b)\big) = \pi\big( \chi(rb)\big)
  = \rho(\chi)(rb) = \big( \rho(\chi) \cdot r\big)(b).
\end{align*}

(c) We need to show that $\tilde{\rho}$ is well-defined, i.e., that
$\tilde{\rho}(d)$ is a derivation (it is clearly a $k$--linear map $R \to
\cent_k(B, N)$). Thus, for $\bar d=\tilde{\rho}(d)$ and $r_i \in R$ we need
to prove that $\bar d(r_1 r_2) = \bar d(r_1) \cdot r_2 + r_1 \cdot \bar
d(r_2)$. To this end we use \eqref{rcg0}:
\begin{align*}
 \bar d(r_1 r_2) &= (\rho \circ d)(r_1 r_2) = \rho\big( d(r_1r_2) \big)
     = \rho\big (    d(r_1) \cdot r_2 + r_1 \cdot d(r_2) \big)
   \\ &= \big( \rho(d(r_1))\big) \cdot r_2 + r_1 \cdot \big(\rho( d(r_2)) \big)
     = \bar d( r_1)  \cdot r_2 + r_1 \cdot \bar d(r_2).
\end{align*}
Finally, for the proof of \eqref{rc4a} let $d\in \Der_k\big(S, \cent_k(A_S,
N_S)\big)$, $r\in R$, $b\in B$ and put $d'=\si_S(d)\in \Der_k (A_S, N_S)$.
Then, using that $\si_S$ is a section of $\eta_S$, we get
\begin{align*}
  &\Big(\big( \eta_B (\rho(d')) \big)(r)\Big)(b)
  = \rho(d')(rb) - r \big( \rho(d')(b)\big)
   =  \pi\big( d'(rb)\big) - r \pi\big( d'(b)\big)
   \\& \quad = \pi \big( d'(rb) - r d'(b)\big) =
       \pi\big( \eta_S(d')(r) (b)\big)
   = \pi\big( d(r)(b)\big) = \big(\big(\tilde{\rho}(d)\big)(r)\big)(b).
 \end{align*}

(d) follows from $\eta_B \circ \si = \tilde{\rho} \circ \varepsilon_N = \Id$
in view of \eqref{etale-extension1b} and \eqref{rc4a}. (e) follows from the diagram \eqref{goalIIb} and (d).
  \end{proof}

We finish this section by discussing important situations in which the
assumptions of the theorem hold. Assumptions (i) through (v) are quite mild
and natural within the theory of forms. The key to effective descent is (vi).
As we shall presently see, it holds in one of the most important cases, namely 
in the case of a finite \'etale covering $S/R$. In particular, it
holds when the trivializing extension $S/R$ is Galois or when $S/R$ is a
finite separable extension of fields.

\begin{lem}\label{rcg-suff}
{\rm (a)}  Assume $B$ is an $R$--algebra for $R\in \kalg$, $N$ is a
$(B,R)$--dimodule and $S\in \Ralg$ is a faithfully flat extension. After
canonically identifying $R$ with a subring of $S$ we further suppose that $R$
is a direct summand of the $R$-module $S$, so that we have
\begin{equation} \label{rcg-suff1a} S=R \oplus S',
 \end{equation}
as a direct sum of $R$-submodules. Define $\pi : N_S \to N$ as the projection
onto $N$ with respect to the decomposition $N_S = N \oplus (N \ot_R S')$.
Then $\pi$ is $R$--linear and satisfies \eqref{rcg-assume}. \sm

{\rm (b)} Suppose $S/R$ is a faithful, finitely generated and projective
$R$-module, e.g., a finite \'etale covering, then \eqref{rcg-suff1a} holds.
\end{lem}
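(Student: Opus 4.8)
The plan is to treat the two parts separately, since (a) is a direct computation with the base-changed dimodule structure while (b) is a purely module-theoretic statement whose proof is the real content.

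For (a) I would first apply $N \ot_R (-)$ to the splitting $S = R \oplus S'$, obtaining an internal direct sum of $R$-submodules $N_S = N\ot_R S \simeq (N\ot_R R)\oplus(N\ot_R S') = N \oplus (N\ot_R S')$, with $\pi$ the projection onto the first summand. Its $R$-linearity is immediate, since $\pi = \Id_N \ot p$ for the $R$-linear retraction $p\co S \to R$ attached to $S = R\oplus S'$. The substance is the verification of \eqref{rcg-assume}, where I would exploit the explicit base-changed action \eqref{di-mod-act}: for $b\in B$ (i.e.\ $b\ot 1\in B\ot_R S$) and $n\ot s\in N_S$ one has $b\cdot(n\ot s) = (b\cdot n)\ot s$ and $(n\ot s)\cdot b = (n\cdot b)\ot s$. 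Thus both $B$-actions act only on the first tensor factor $N$ and leave the $S$-factor, hence the decomposition $S = R\oplus S'$, untouched. Consequently $N$ and $N\ot_R S'$ are sub-dimodules, the action operators are block-diagonal for this splitting, and $\pi$ commutes with them; this is precisely \eqref{rcg-assume}. No real obstacle arises here.

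For (b) the goal is an $R$-linear retraction of the unit inclusion $\io\co R\to S$, $r\mapsto r1_S$; equivalently a homomorphism $f\in\Hom_R(S,R)$ with $f(1_S)=1_R$, for then $\io\circ f$ is an idempotent in $\End_R(S)$ with image $R\cdot 1_S$, giving \eqref{rcg-suff1a} with $S'=\Ker f$. I would show that $1_S$ is a unimodular element of the finitely generated projective module $S$, i.e.\ that the ideal $I=\{f(1_S):f\in\Hom_R(S,R)\}$ equals $R$. The key input is faithfulness: for a finitely generated projective module, faithful is equivalent to $\supp(S)=\Spec R$, hence to $\rank_{\mathfrak p}(S)\geq 1$, i.e.\ $S\ot_R\kappa(\mathfrak p)\neq 0$, for every prime $\mathfrak p$. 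I would then argue locally, using that $I=R$ iff $I_{\mathfrak m}=R_{\mathfrak m}$ for every maximal ideal $\mathfrak m$: after localizing, $S_{\mathfrak m}$ is free over $R_{\mathfrak m}$, and writing $1_S$ in a basis, some coordinate is a unit exactly when the image of $1_S$ in $S\ot_R\kappa(\mathfrak m)$ is nonzero. But that image is the identity of the nonzero algebra $S\ot_R\kappa(\mathfrak m)$, hence nonzero, so $I_{\mathfrak m}=R_{\mathfrak m}$ and therefore $I=R$. Finally I would record that a finite \'etale covering is faithful, finitely generated and projective (finite, flat and finitely presented yields finite projective, and faithfully flat yields faithful), which justifies the ``e.g.''

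I expect the main obstacle to be exactly the translation in (b) of the algebraic faithfulness hypothesis into the module-theoretic unimodularity of $1_S$: the mechanism that makes everything work is that $1_S$ survives in every residue algebra $S\ot_R\kappa(\mathfrak m)$ precisely because a faithful finitely generated projective module has positive rank everywhere. Once this is established the splitting is formal, and (a) then supplies the map $\pi$ required by assumption (vi).
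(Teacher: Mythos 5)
Your proposal is correct, and it splits along the same fault line as the paper, but the two halves compare differently. For (a) your argument is exactly the paper's: the paper observes that $\pi$ is $R$--linear because $N_S = N \oplus (N\ot_R S')$ is a decomposition of $R$--modules, and that by \eqref{di-mod-act} both summands $N = N\ot_R R$ and $N\ot_R S'$ are subdimodules, whence \eqref{rcg-assume}; your ``block-diagonal'' formulation of the $B$--actions is the same observation spelled out. For (b), however, the paper gives no argument at all --- it simply cites \cite[III, Lemme~1.9]{knus-ojan} --- while you prove the statement from scratch: $1_S$ is a unimodular element of the faithful finitely generated projective module $S$, because faithfulness of such a module forces positive rank at every prime, so the image of $1_S$ in each residue algebra $S\ot_R \kappa(\mathfrak{m})$ is the identity of a nonzero algebra and hence some coordinate of $1_S$ in a local basis is a unit; then any $f\in\Hom_R(S,R)$ with $f(1_S)=1_R$ yields the splitting $S = R\cdot 1_S \oplus \Ker f$. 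This is in substance the standard proof of the cited lemma, so you have made the step self-contained rather than found a genuinely new route; what it buys is transparency about exactly where the faithfulness hypothesis enters. Two small points you leave implicit, both harmless: the identification of $I_{\mathfrak{m}}$ with the ideal of coordinates of $1_S$ uses that $\Hom_R(S,R)$ commutes with localization, which holds because $S$ is finitely presented (being finitely generated projective); and identifying $R\cdot 1_S$ with $R$ in \eqref{rcg-suff1a} needs $r\mapsto r1_S$ to be injective, which again follows from faithfulness since $r1_S=0$ forces $rS=0$. Your justification of the ``e.g.'' is also sound: a finite \'etale covering is finitely generated, flat and finitely presented as a module, hence projective, and faithfully flat, hence faithful.
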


\begin{proof} (a) Since $N_S = N \oplus (N \ot_R S')$ is a decomposition
of $N_S$ as $R$-module, the map $\pi$ is $R$--linear. From the $(B,
R)$--dimodule structure of $N_S$ given in \eqref{di-mod-act} it follows that
$N=N\ot_R R$ and $N\ot_R S'$ are subdimodules, which easily implies
that \eqref{rcg-assume} holds. \sm

(b) This is \cite[III, Lemme~1.9]{knus-ojan}.
  \end{proof}

\begin{rem} We point out that the assumption in Lemma~\ref{rcg-suff}(b)
is not necessary for \eqref{rcg-suff1a} to hold. For example, let $R=k[t]$,
$S=k[t] \times k[t^{\pm 1}]$ viewed as $R$--algebra by embedding $R$
diagonally into $S$. Then $S$ is an \'etale covering of $R$ which is not
finite. But \eqref{rcg-suff1a} holds, for
example by taking $S'= 0 \oplus k[t^{\pm 1}]$.
\end{rem}

\begin{example} The decomposition \eqref{rcg-suff1a} takes place naturally
whenever a finite group $\Ga$ acts completely
reducibly on $S\in \kalg$ by $k$--algebra automorphisms. One then knows that
$S = S^\Ga \oplus S'$ where
\begin{align*}
  S^\Ga &= \{ s\in S : \ga \cdot s = s \hbox{  for all } \ga\in
\Ga\}, \\
   S' &= \Span_k \{ \ga \cdot s - s: \ga \in \Ga, s\in S\}.
 \end{align*}
It is immediate that $R=S^\Ga$ is a $k$--algebra and that $S'$ is an
$R$-submodule: $r(\ga \cdot s - s) = \ga \cdot (rs) - rs \in S'$ where $r\in R$ and $s\in S$. Such a situation occurs for example in equivariant map algebras \cite{NSS}.
\end{example}

\begin{rem} Let again $\Ga$ be a finite group of automorphisms of
$S\in \kalg$ and let $R=S^\Ga$. Then \cite[V.2]{SGA1} gives sufficient
conditions for $S/R$ to be a finite \'etale covering. The most relevant case
is that of a finite Galois extension $S/R$ with Galois group $\Ga$. The quintessential example is that of a multiloop algebra \cite{P}. \end{rem}

\section{Applications}\label{applications}

In this section we discuss several special cases of our main result as well
as generalizations and applications.

\subsection{Algebra derivations}
We assume the conditions (i)--(vi) of \ref{goal} where $N=B$ with its natural
$(B,k)$--dimodule structure of Example~\ref{trivial}. Moreover, we suppose
that $A$ is central. By definition, $\cent_k(B,B) = \cent_k(B)$ is the usual
centroid of $B$. From Corollary~\ref{forms-are-cent} and
Lemma~\ref{cent-perf} we get
\[ R \simeq \Cent_R(B) \simeq \Cent_k(B).\]
The $k$-module $\Der_k(B)$ has a natural Lie $k$--algebra structure.
Our Theorem~\ref{main} states
\begin{equation} \label{algebra1}
    \Der_k(B) \simeq \Der_R(B) \rtimes \Der_k(R).
\end{equation}
If $S/R$ is a Galois extension, \ref{goal}(vi) holds and \eqref{algebra1}
generalizes the main theorem of \cite{P} (in the Galois case).

Of course, the most important case is that when $k$ is a field and $A$ is a
finite-dimensional (perfect and central) $k$-algebra. For example, if $B$ is
a multiloop algebra, say $B=M(A, \si_1, \ldots, \si_n)$ for commuting finite order automorphisms $\si_i$ of the $k$-algebra $A$, it can be shown that
$\Der_R(B) \simeq M(\Der_k(A), \si_1^*, \ldots, \si_n^*)$ where $\si_i^*(d) =
\si_i \circ d \circ \si_i^{-1}$. We thus recover
\cite[Th.~3.7]{Azam:multiloop} in the case of a central $A$. We point out
that in case $A$ is not necessarily central, we can also get the description
of $\Der_k(B)$ of \cite[(3.8)]{Azam:multiloop} by interpreting the second sum
in loc. cit. as $\Der_k\big(R, \cent_k(B)\big)$.
\sm

\subsection{Lie algebras} \label{subsec:lie}
We specialize Theorem~\ref{main} to the setting of forms of Lie algebras. We
will change the notation to abide by standard conventions. Recall that a
module $M$ of a Lie algebra $L$ over a field  is called {\em locally
finite\/} if every $m\in M$ belongs to  a finite-dimensional $L$-submodule of
$M$.

\begin{lemma} \label{lem:lf-g-mod} Let $\g$ be a finite-dimensional semisimple Lie algebra
over a field $k$ of characteristic zero, let $K\in \kalg$ and let $M$ be a
$\g_K$-module which is locally-finite as a $\g$-module. Then $\Der_K(\g_K, M)
= \IDer(\g_K, M)$. \end{lemma}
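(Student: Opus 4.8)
The plan is to reduce the statement, which concerns a module over the base-changed algebra $\g_K$, to the classical fact that finite-dimensional semisimple Lie algebras over a characteristic-zero field have only inner derivations into any finite-dimensional module (Whitehead's first lemma). The main conceptual point is that although $M$ is a $\g_K$-module, the derivation property and the inner-derivation property only involve the $\g$-action (viewed through the canonical $(\g_K,k)$-dimodule structure of Example~\ref{exa:lie-alg}), so the coefficient ring $K$ plays only a bookkeeping role via the $K$-linear structure on the relevant Hom-spaces.

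First I would set up the comparison. Given $d \in \Der_K(\g_K, M)$, I want to produce $m\in M$ with $d = \pa_m$. The natural strategy is to reduce to $\g$ itself: by restriction, $d|_{\g\ot 1}$ is a $k$-linear derivation $\g \to M$, and since $M$ is locally finite as a $\g$-module, every element of $\g$ maps into some finite-dimensional $\g$-submodule. The key finiteness input is that $\g$ itself is finite-dimensional, so the image $d(\g\ot 1)$ lies in a \emph{single} finite-dimensional $\g$-submodule $M_0 \subset M$ (take the $\g$-submodule generated by the images of a basis of $\g$; local finiteness makes each such submodule finite-dimensional, and a finite sum of finite-dimensional submodules is finite-dimensional). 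On $M_0$ the classical Whitehead lemma applies: $\Der_k(\g, M_0) = \IDer_k(\g, M_0)$, so there is $m\in M_0$ with $d(x\ot 1) = x\cdot m$ for all $x\in \g$.

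Next I would promote this to all of $\g_K$. The subtle step is to check that the same $m$ works on elements $x\ot s$ with $s\in K$, i.e. that $d(x\ot s) = (x\ot s)\cdot m$. Here I would exploit $K$-linearity of $d$: writing $x\ot s = s\,(x\ot 1)$ in the $K$-module $\g_K$ and using that $d$ is $K$-linear while the $\g_K$-module action of $s$ on $M$ is also $K$-linear and commutes appropriately, one gets $d(x\ot s) = s\,d(x\ot 1) = s\,(x\cdot m)$. The remaining identity $s\,(x\cdot m) = (x\ot s)\cdot m$ is exactly the compatibility between the $K$-module structure and the $\g_K$-action (the element $x\ot s$ acts as $s$ times the action of $x\ot 1$). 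This identifies $d$ with $\pa_m$ as a map on all of $\g_K$, giving $d\in \IDer_K(\g_K, M)$; the reverse inclusion is trivial.

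The main obstacle I anticipate is the local-finiteness bookkeeping in the first promotion step: I must be careful that Whitehead's lemma is being applied to a genuinely \emph{finite-dimensional} $\g$-module, which is why reducing the image of the finite-dimensional $\g$ into one finite-dimensional submodule $M_0$ is essential — local finiteness alone (without finite-dimensionality of $\g$) would not suffice. A secondary point requiring care is making precise that the chosen $m$ lying in $M_0\subset M$ really represents $d$ as an inner derivation of the full module $M$, not merely of $M_0$; this is immediate since the inner derivation $\pa_m$ is defined by the ambient $\g$-action and does not depend on the submodule in which $m$ is viewed. Once these finiteness and compatibility issues are dispatched, the identification $\Der_K(\g_K, M) = \IDer(\g_K, M)$ follows directly.
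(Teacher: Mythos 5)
Your proposal is correct and follows essentially the same route as the paper's own proof: both use finite-dimensionality of $\g$ together with local finiteness to place $d(\g \ot 1_K)$ inside a single finite-dimensional $\g$-submodule, invoke Whitehead's first lemma there, and then extend from $\g \ot 1_K$ to all of $\g_K$ by $K$-linearity of $d$ via $(x\ot s)\cdot m = s\bigl((x\ot 1_K)\cdot m\bigr)$. Your extra care about generating a single finite-dimensional submodule from a basis of $\g$, and about $m$ defining an inner derivation of $M$ rather than just of the submodule, merely makes explicit what the paper leaves implicit.
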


\begin{proof} Let $d \in \Der_K(\g_K, M)$. Then $d(\g\ot 1_K)$ is a
finite-dimensional subspace of $M$. Hence there exists a finite-dimensional
$\g$-submodule $N$ of the $\g$-module $M$ such that $d(\g \ot 1_K) \subset
N$. By the first Whitehead Lemma, there exists $n\in N$ such that $d(x \ot
1_K) = (x\ot 1_K) \cdot n$ for all $x\in \g$. But then for $s\in K$ we get
$d(x\ot s) = s d(x\ot 1_K) = s \big( (x\ot 1_K) \cdot n\big) = (x\ot s) \cdot
n$, which shows that $d$ is the inner derivation given by $n$.   \end{proof}

\begin{prop} \label{cor-Lie} Let $\g$ be a finite-dimensional semisimple Lie algebra
defined over a field\/ $k$ of characteristic zero,
let $R\in \kalg$, $S\in \Ralg$ an \'etale covering, $\scL$ an $(S/R)$-form of $\g_R
= \g\ot_k R$ and $N$ an $\scL$-module such that the canonical $\g_S$-action
on $N_S$ is locally-finite. Also suppose that we have an $R$--linear map $\pi
: N_S \to N$ satisfying \eqref{rcg-assume}. Then
\[
 \Der_k(\scL, N) \simeq \IDer(\scL, N) \oplus
         \Der_k\big(R, \cent_k(\scL, N)\big).\]
In particular the first cohomology group of $\scL$ with coefficients in $N$
is
\[
     H^1(\scL, N) \simeq \Der_k\big( R, \cent_k(\scL, N)\big).
\]
\end{prop}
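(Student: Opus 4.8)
The plan is to read off the decomposition directly from Theorem~\ref{main} and then to identify its first summand $\Der_R(\scL,N)$ with the inner derivations. First I would check that hypotheses (i)--(vi) of~\ref{goal} hold with $A=\g$, $B=\scL$ and the given module $N$. Indeed, $\g$ is perfect, since a semisimple Lie algebra satisfies $[\g,\g]=\g$, and it is finitely presented as a $k$-module, being finite-dimensional over the field $k$; this gives (i). Condition (ii) is automatic as $k$ is a field, (iii) and (iv) are part of the hypotheses, the canonical $(\scL,R)$-dimodule structure of the $\scL$-module $N$ (Example~\ref{exa:lie-alg}) supplies (v), and the map $\pi$ is exactly (vi). Thus Theorem~\ref{main}(d) yields
\[
 \Der_k(\scL,N)\simeq \Der_R(\scL,N)\oplus \Der_k\big(R,\Cent_k(\scL,N)\big).
\]

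The crux is to show $\Der_R(\scL,N)=\IDer(\scL,N)$, which I would establish by faithfully flat descent of the property ``every $R$-linear derivation is inner''. Concretely, I would apply Corollary~\ref{cor-innder-Lie}(c) with $R$ in place of the base ring $k$, with $S/R$ (faithfully flat, being an \'etale covering) in place of $K$, and with $L=\scL$, which is finitely presented as an $R$-module as recorded in the proof of Corollary~\ref{forms-are-cent}. This reduces the equality downstairs to $\Der_S(\scL_S,N_S)=\IDer(\scL_S,N_S)$ upstairs. Since $\scL$ is an $(S/R)$-form, there is an $S$-algebra isomorphism $\scL_S\simeq\g_S$; transporting along it carries derivations to derivations and inner ones to inner ones, so it is enough to verify $\Der_S(\g_S,N_S)=\IDer(\g_S,N_S)$. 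This is precisely Lemma~\ref{lem:lf-g-mod} applied with $K=S$, whose only hypothesis --- that $N_S$ be locally finite as a $\g$-module --- is the standing local-finiteness assumption on the canonical $\g_S$-action. Hence $\Der_R(\scL,N)=\IDer(\scL,N)$.

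Substituting this equality into the first display yields the asserted isomorphism $\Der_k(\scL,N)\simeq\IDer(\scL,N)\oplus\Der_k\big(R,\Cent_k(\scL,N)\big)$. For the cohomology statement I would use that the first cohomology of $\scL$, viewed as a $k$-Lie algebra, with coefficients in $N$ is by definition $H^1(\scL,N)=\Der_k(\scL,N)/\IDer(\scL,N)$; since the decomposition exhibits $\Der_k\big(R,\Cent_k(\scL,N)\big)$ as a complement of $\IDer(\scL,N)$ in $\Der_k(\scL,N)$, the quotient is $\Der_k\big(R,\Cent_k(\scL,N)\big)$, as claimed. I expect the main obstacle to lie in the descent step: one must justify that Corollary~\ref{cor-innder-Lie}(c) may legitimately be invoked with $R$ as base ring --- its proof uses only commutativity of the base, faithful flatness of the extension, and finite presentation of $\scL$ --- and that the form isomorphism $\scL_S\simeq\g_S$ genuinely intertwines the module structures, so that Lemma~\ref{lem:lf-g-mod} applies to the transported $\g_S$-module $N_S$.
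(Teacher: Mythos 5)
Your proposal is correct and takes essentially the same route as the paper's own proof: Theorem~\ref{main} supplies the decomposition $\Der_k(\scL,N)\simeq \Der_R(\scL,N)\oplus \Der_k\big(R,\Cent_k(\scL,N)\big)$, Corollary~\ref{cor-innder-Lie}(c) (applied with base ring $R$ and the faithfully flat extension $S$) reduces $\Der_R(\scL,N)=\IDer(\scL,N)$ to the trivialized equality $\Der_S(\g_S,N_S)=\IDer(\g_S,N_S)$, and Lemma~\ref{lem:lf-g-mod} settles the latter. The additional details you supply --- verifying hypotheses (i)--(vi) of~\ref{goal}, justifying the rebasing of Corollary~\ref{cor-innder-Lie}, and checking that the form isomorphism $\scL_S\simeq\g_S$ intertwines the module structures --- merely make explicit what the paper leaves implicit.
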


\begin{proof} All assumptions of Theorem~\ref{main} are fulfilled. The result
then follows from \eqref{rcg6} as soon as we have shown that $\Der_R(\scL, N)
= \IDer(\scL, N)$. Since $\scL$ is finitely presented, an application of
Corollary~\ref{cor-innder-Lie} shows that this holds if and only if
$\Der_S(\scL_S, N_S) = \IDer(\scL_S, N_S)$, equivalently, $\Der_S(\g_S, N_S)=
\IDer(\g_S, N_S)$. But the latter equality is a consequence of
Lemma~\ref{lem:lf-g-mod}.
  \end{proof}
\sm

\subsection{Associative algebras}\label{asso-rsult}
Theorem~\ref{main} applies also to associative algebras. For them, it is
natural to assume that $N$ is a $B$-bimodule. Suppose that this is the case
and that $A$ is a separable $k$--algebra. It then follows that $B$ is
separable too \cite[III, \S2]{knus-ojan}. Hence (\cite[III,
Thm.~1.4]{knus-ojan}) all  $R$--linear derivations $d\co B \to N$ are inner,
i.e., $\Der_R(B,N) = \IDer(B,N)$, and so the first Hochschild cohomology
group of the associative $k$--algebra $B$ with values in the $k$-module $N$
is
\[
     HH^1(B, N)\simeq \Der_k\big(R, \Cent_k(B,N)\big).
\]
For example. for $B=N$ we have $\cent_k(B,N) = \cent_k(B) = Z(B)$, the centre
of $B$. We therefore get
\[ HH^1(B) \simeq \Der_k\big(R,Z(B)\big). \]
In particular, for a central $B$ we get $HH^1(B) \simeq \Der_k(R)$.
\sm

\subsection{Jordan algebras}
We will leave the interpretation of Theorem~\ref{main} for general Jordan
algebras to the reader, and only consider the most interesting special case
here.

The analogue of Lemma~\ref{lem:lf-g-mod} for Jordan algebras remains true by
replacing Whitehead's Lemma by Jacobson's Theorem \cite[VIII, Th.~10]{jake},
which says that every derivation of a semisimple finite-dimensional Jordan
algebra with values in a Jordan bimodule is inner. We therefore also have the
analogue of Proposition~\ref{cor-Lie} which we state here in simplified form.

\begin{prop} \label{jordan-p} Let $J$ be a finite-dimensional
semisimple Jordan algebra over a field $k$ of characteristic $0$,
$R\in \kalg$ an extension of $k$, 
$S\in \Ralg$ a finite \'etale covering, $\scJ$ an $S/R$-form of
$J_R$ and $N$ a Jordan bimodule of $\scJ$ such that the canonical action of
$J_S$ on $N_S$ is locally finite. Then
\[ \Der_k(\scJ, N) \simeq \IDer(\scJ, N) \oplus \Der_k\big(R, \cent_k(\scJ,
N)\big).
\] \end{prop}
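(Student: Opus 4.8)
The plan is to establish Proposition~\ref{jordan-p} as a direct specialization of Theorem~\ref{main}, exactly mirroring the proof of Proposition~\ref{cor-Lie}. First I would verify that the hypotheses of Theorem~\ref{main} are met: the finite-dimensional semisimple Jordan algebra $J$ over a field $k$ of characteristic $0$ is perfect (being semisimple, it equals its square) and finite-dimensional, hence finitely presented as a $k$-module, so (i) holds; $R\in\kalg$ is flat over the field $k$ since every module over a field is flat, giving (ii); the finite \'etale covering $S/R$ gives (iii); $\scJ$ is an $S/R$-form of $J_R$ by assumption, giving (iv); and $N$ as a Jordan bimodule of $\scJ$ carries a natural $(\scJ,R)$-dimodule structure, giving (v). For (vi), since $S/R$ is a \emph{finite} \'etale covering, it is a faithful, finitely generated projective $R$-module, so Lemma~\ref{rcg-suff}(b) yields the decomposition \eqref{rcg-suff1a}, and then Lemma~\ref{rcg-suff}(a) produces the required $R$-linear projection $\pi : N_S \to N$ satisfying \eqref{rcg-assume}. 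This is why the hypothesis in the simplified statement can be strengthened to a finite covering while dropping the explicit assumption of $\pi$.

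Next I would invoke \eqref{rcg6} of Theorem~\ref{main}, which already gives the splitting
\[
 \Der_k(\scJ, N) \simeq \Der_R(\scJ, N) \oplus \Der_k\big(R, \cent_k(\scJ, N)\big).
\]
It then remains only to identify the $R$-linear part $\Der_R(\scJ, N)$ with the inner derivations $\IDer(\scJ, N)$. Following the template of Proposition~\ref{cor-Lie}, I would argue by faithfully flat descent: $\scJ$ is finitely presented (as established in the proof of Corollary~\ref{forms-are-cent}), so an application of the Jordan analogue of Corollary~\ref{cor-innder-Lie} reduces the equality $\Der_R(\scJ, N) = \IDer(\scJ, N)$ to the corresponding statement after base change to $S$, namely $\Der_S(\scJ_S, N_S) = \IDer(\scJ_S, N_S)$. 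Since $\scJ_S \simeq J_S$ as $S$-algebras, this becomes $\Der_S(J_S, N_S) = \IDer(J_S, N_S)$.

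Finally I would establish this last equality via the Jordan analogue of Lemma~\ref{lem:lf-g-mod}: given the local finiteness of the canonical $J_S$-action on $N_S$, any $S$-linear derivation $d \in \Der_S(J_S, N_S)$ sends $J\ot 1_S$ into a finite-dimensional $J$-submodule of $N_S$, and Jacobson's Theorem \cite[VIII, Th.~10]{jake} (every derivation of a finite-dimensional semisimple Jordan algebra into a Jordan bimodule is inner) supplies an element $n$ with $d(x\ot 1_S) = (x\ot 1_S)\cdot n$ for $x\in J$; $S$-linearity then extends this to all of $J_S$. The main obstacle, and the only genuinely nontrivial point, is confirming that Corollary~\ref{cor-innder-Lie} and Lemma~\ref{lem:lf-g-mod} carry over verbatim to the Jordan setting. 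The descent corollary is formal and uses only the dimodule formalism, so it transfers without change; the local-finiteness lemma transfers because Whitehead's Lemma is used only as the vanishing statement $H^1 = 0$ for a finite-dimensional semisimple object, a role played identically by Jacobson's Theorem. I would note that the characteristic-$0$ and finite-dimensionality hypotheses on $J$ are precisely what guarantee semisimplicity persists and Jacobson's Theorem applies.
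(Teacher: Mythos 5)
Your proposal is correct and follows essentially the same route as the paper, which proves Proposition~\ref{jordan-p} exactly by citing the Jordan analogue of Lemma~\ref{lem:lf-g-mod} (with Jacobson's Theorem \cite[VIII, Th.~10]{jake} in place of Whitehead's Lemma) and then running the proof of Proposition~\ref{cor-Lie}, with the finite \'etale hypothesis supplying $\pi$ via Lemma~\ref{rcg-suff}. The only point worth stating more carefully is your claim that Corollary~\ref{cor-innder-Lie} transfers ``verbatim'': in the Jordan setting inner derivations are spanned by commutators of multiplication operators rather than single maps $\pa_m$, so the descent statement must be re-proved for that notion of $\IDer$ --- though the argument is indeed formal and goes through unchanged in substance.
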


We have assumed characteristic zero only to simplify the presentation. This
can be generalized depending on the type of $J$. For example, if $J$ is
separable simple exceptional then characteristic $\ne 2$ suffices (one must
then however replace Jacobson's Theorem by Harris' Theorem \cite[VII.7,
Th.~14]{jake}).
\sm

\subsection{Work in preparation}
Our main theorem is also at the heart of several works in preparation. We
outline three of them.

 (a) In this case $A=\g$ is a finite-dimensional central-simple
      Lie algebra over a field $k$ of characteristic $0$. We take
      $N=B^*$. If $V$ is a $k$-space, viewed as a trivial $B$-module, the
      extensions
\[ 0 \to V \to E \to B \to 0 \]
are measured (up to isomorphisms) by $H^2(B,V)$. Our main theorem can be
used to show that every cocycle in $Z^2(B,V)$ is cohomologous to a {\em
unique} standard cocycle. Examples of standard cocycles are given by the
universal cocycle of Kassel \cite{Kas}. Details will be given in
\cite{pps-new}. \sm

(b) In \cite{NPP} we will use the relation between invariant
    bilinear forms of an algebra $A$ and $\cent_k(A, A^*)$ to describe
    invariant bilinear forms on algebras obtained by \'etale descent.
    Among other things we will recover \cite[Lemma~2.3]{MSZ} which
    considers this question for Lie algebras in the untwisted case. The
    need to consider graded invariant forms will require a graded version
    of our main theorem. Besides other applications we will give
    a classification-free proof of Yoshii's Theorem \cite{y:lie} for
    multiloop Lie tori stating that graded invariant
     bilinear forms are unique up to scalars. \sm

(c) The results of this paper are also relevant for the
    description of the universal central extension $\uce(L)$ of a perfect
    graded Lie algebra $L$. Namely, denote by $\AD(L)= \{d\in
    \Der_k^\La(L, L\gr*): d(l)(l)=0 \}$,  the alternating $\La$-graded derivations of
    $L$ into its graded dual. Then (\cite{N1}, see also
    \cite[5.1.3]{n:eala-summer}) the homology $H_2(L)$, which is the
    kernel of the universal central extension, is canonically isomorphic
    to $\AD(L)/\IDer(L, L\gr*)$. Moreover, this approach naturally leads
    to a description of the universal central extension $\uce(L^\scF)$ of
    a fixed point subalgebra of $L$ under a group of automorphisms $\scF$
    as the fixed point subalgebra $\uce(L)^\scF$, where $\scF$ is
    canonically extended to $\uce(L)$. In particular, this applies to
    certain equivariant map algebras, like multiloop algebras, and to
    algebras obtained by Galois descent.

\end{document}